\newcommand{\R}{{\mathbb R}}
\newcommand{\Var}{{\rm Var}}
\newcommand{\Cov}{{\rm Cov}}
\DeclareMathOperator{\BB}{\textit{BB}}
\newcommand{\claw}{{\stackrel{\mathcal{D}}{\longrightarrow}}}
\newtheorem{lemma}{Lemma}[section]
\newtheorem{theorem}[lemma]{Theorem}
\newtheorem{corollary}[lemma]{Corollary}
\newtheorem{example}[lemma]{Example}
\newtheorem{remark}[lemma]{Remark}
\begin{document}
\title[Power of Change-Point Tests for LRD Data]
{Power of Change-Point Tests for Long-Range Dependent Data}
\author[H. Dehling]{Herold Dehling}
\author[A. Rooch]{Aeneas Rooch}
\author[M.~S. Taqqu]{Murad S.~Taqqu}
\today

\address{
Fakult\"at f\"ur Mathematik, Ruhr-Universit\"at Bo\-chum, 
44780 Bochum, Germany}
\email{herold.dehling@rub.de}
\address{Fakult\"at f\"ur Mathematik, Ruhr-Universit\"at Bo\-chum, 
44780 Bochum, Germany}
\email{aeneas.rooch@rub.de}
\address{Department of Mathematics, 111 Cummington St.,  
Boston University,  Boston MA 02215
}
\email{murad@bu.edu}
\keywords{Change-point problems, nonparametric change-point tests,
Wilcoxon two-sample rank test, power of test, local alternatives, asymptotic relative efficiency of tests, long-range dependent data, long memory,
functional limit  theorem.}

\thanks{Herold Dehling and Aeneas Rooch were supported in part by the 
Collaborative Research Grant 823, Project C3 {\em Analysis of Structural 
Change in Dynamic Processes}, of the German Research 
Foundation. Murad S.~Taqqu was supported in part by NSF grant DMS-1007616 at Boston University}
\begin{abstract}
We investigate the power of the CUSUM test and the Wilcoxon change-point tests for a shift in the mean 
of a process with long-range dependent noise. We derive analytic formulas for the power of these tests
under local alternatives. These results enable  us to calculate the asymptotic relative efficiency (ARE) of the 
CUSUM test and the Wilcoxon change point test. We obtain the surprising result that for Gaussian data, the ARE of these two tests equals $1$, in contrast to the case of i.i.d. noise when the ARE is known to be $3/\pi$.

\end{abstract}
\maketitle
\tableofcontents
\section{Introduction}
Statistical tests for the presence of changes in the structure of time series are of great importance in a 
wide range of scientific discussions, e.g.\ regarding economic, technological and climate data. Many procedures for detecting changes and for estimating change-points have been proposed in the literature; see e.g.\ Cs\"org\H{o} and Horvath (1997) for a detailed exposition. In the case of independent data, the theory is quite satisfactory. For various types of change-point models, statistical procedures have been proposed and
their properties investigated. In contrast, the situation is different for dependent data, such as encountered in time series models. For dependent data, most research has focused on linear procedures, such as cumulative sum (CUSUM) tests, and there are many open problems when it comes to other types of test procedures, e.g. those used in robust statistics.

In the present paper, we study the change-point problem for long-range dependent data. Specifically, we will test the hypothesis that the process is stationary against the alternative that there is a change in the mean. The classical test statistic for this problem is the CUSUM statistic, 
\begin{equation}
\max_{1\leq k\leq n-1} \left|\sum_{i=1}^k X_i -\frac{k}{n} \sum_{i=1}^n X_i\right|.
\label{eq:cusum}
\end{equation} 
When the test statistic is large, one infers that there is a change in the mean.
The CUSUM test has good properties when the underlying process is Gaussian.
The asymptotic distribution of the CUSUM test in the presence of long-range dependent data has been investigated by Horv\'ath and Kokoszka (1997).

However, the CUSUM test is not robust against possible outliers in the data, because the sum $\sum_{i=1}^k X_i$ can change drastically when there are outliers. Recently, Dehling, Rooch and Taqqu (2013) have proposed a robust alternative to the CUSUM test, which is based on the Wilcoxon two-sample rank statistic. The corresponding "Wilcoxon change-point test"
uses the test statistic 
\begin{equation}
\max_{1\leq k\leq n-1}  \left| \sum_{i=1}^k \sum_{j=k+1}^n
\left(1_{\{ X_i \leq X_j\}} -\frac{1}{2}\right) \right|.
\label{eq:wilcoxon}
\end{equation}
One rejects the null hypothesis when this test statistic is large. Rank tests for change-point problems have been studied earlier by Antoch et al (2008),  in the presence of i.i.d. data, and by Wang (2008) for linear processes. 

In their paper, Dehling, Rooch and Taqqu (2013) investigated the asymptotic  distribution of the Wilcoxon change-point test under the null hypothesis of
no change, in the presence of long-range dependence. Moreover, they performed a simulation study to compare the finite sample performance and the power of the CUSUM test based on (\ref{eq:cusum}) and the Wilcoxon change-point test based on (\ref{eq:wilcoxon}).\footnote{Dehling et al. (2013) called the CUSUM test, the "difference of means test", and called the
Wilcoxon change-point test, the "Wilcoxon-type" test.}

In the present paper, we study the power of the CUSUM test and the Wilcoxon change-point test for a shift in the mean of a long-range dependent process. We will calculate the power under local alternatives, where the
height of the shift decreases with the sample size $n$ in such a way that the tests have non-trivial limit power as $n\rightarrow \infty$. These results enable us to compute the asymptotic relative efficiency (ARE) of
the CUSUM and the Wilcoxon change-point tests, which is defined as the limit of the ratio of the sample sizes required to obtain a given power. We obtain the surprising result that the ARE of these two tests equals $1$ in the case of long-range dependent Gaussian data. This is in contrast with the case of i.i.d.\ and short-range dependent data, where the ARE of the Wilcoxon change-point test with respect to the CUSUM test is $3/\pi$.
In the context of M-estimation of a location parameter, a similar phenomenon has been observed by Beran (1991); see also Beran (1994), Corollary~8.1.

We consider a model where the observations are
generated by a stochastic process $(X_i)_{i\geq 1}$ of the type
\begin{equation}
 X_i= \mu_i + \epsilon_i, 
\label{eq:Xeps}
\end{equation}
where $(\epsilon_i)_{i\geq 1}$ is a long-range dependent stationary process 
with 
mean zero, finite variance and where $(\mu_i)_{i\geq 1}$ are the unknown means.
We focus on the case when $(\epsilon_i)_{i\geq 1}$
is an instantaneous functional of a stationary Gaussian process $(\xi_i)_{i\geq 1}$ with 
non-summable covariances,  i.e.
\[
 \epsilon_i=G(\xi_i), \quad i\geq 1.
\]
We assume that $(\xi_i)_{i\geq 1}$ is a long-range dependent (LRD), mean-zero Gaussian process with variance
$E(\xi_i^2)=1$  and  autocovariance function 
\begin{equation}
 \rho(k)=k^{-D}L(k), \quad k\geq 1,
\label{eq:acf}
\end{equation}
where $0<D<1$, and where $L(k)$ is a slowly varying function. 
Moreover,
$G:\R\rightarrow \R$ is a measurable function satisfying $E(G(\xi_i))=0$. 

Based on observations $X_1,\ldots, X_n$, we wish to test the hypothesis
\[
 H:\, \mu_1=\ldots=\mu_n
\] 
that there is no change in the means of the data against the alternative
\begin{equation}
 A:\, \mu_1=\ldots=\mu_k\neq \mu_{k+1}=\ldots=\mu_n,
 \mbox{ for some } k\in \{1,\ldots,n-1\}.
\label{eq:A-alt}
\end{equation}
We shall refer to this test problem as $(H,A)$.

Dehling, Rooch and Taqqu (2013) have studied two tests for this  change-point problem,
namely the  Wilcoxon change-point test which is based on the test statistic
\begin{equation}
 W_n=\frac{1}{n\, d_n}\max_{1\leq k\leq n-1} 
 \left|\sum_{i=1}^k \sum_{j=k+1}^n \left(1_{\{X_i\leq X_j\}} - \frac{1}{2}  
 \right)\right|.
\label{eq:wil}
\end{equation}
and the CUSUM  test which uses the test statistic
\begin{equation}  
 D_n :=\frac{1}{n\, d_n}  \max_{1 \leq k \leq n-1} \left| 
 \sum_{i=1}^{k} 
 \sum_{j=k+1}^n  \left(X_j - X_i\right)
\right|.
\label{eq:dif}
\end{equation}
Observe that the normalization $d_n$, which will be specified below,  is the same for both tests.
These tests are similar in spirit. They compare the first part of the sample to the second part. The 
Wilcoxon change-point test (\ref{eq:wil}) involves the rank of the data whereas the CUSUM test (\ref{eq:dif}) involves their values. One rejects the null  hypothesis of no change when these test statistics are large.

Dehling, Rooch and Taqqu (2013) investigated the asymptotic distribution of these test statistics under the null hypothesis $H$ of no change in the means. In addition, they calculated the power of these tests numerically via a Monte-Carlo simulation. In this paper, we will compute the power of the above test statistics under a local alternative. More specifically, we shall consider the following sequence of alternatives
\begin{equation}
 A_{\tau,h_n}(n):\; \mu_i
 =\left\{ \begin{array}{ll}
  \mu & \mbox{ for } i=1,\ldots,[n\tau] \\
  \mu+h_n &  \mbox{ for } i= [n\tau] +1,\ldots,n,
 \end{array}
\right.
\label{eq:local-alt}
\end{equation}
where $0\leq \tau \leq 1$. Observe that the level shift $h_n$ depends on the sample size $n$.

\section{Power of the CUSUM Test under Local Alternatives}
We will first investigate the asymptotic distribution of the process
\begin{equation}
 D_n(\lambda):=\frac{1}{n\, d_n} \sum_{i=1}^{[n\lambda]} \sum_{j=[n\lambda]+1}^n
 (X_j-X_i), \; 0\leq \lambda\leq 1.
\label{eq:dn-lambda}
\end{equation}
To do so, we consider the Hermite expansion of $G(\xi_i)$, namely
\[
 G(\xi_i)=\sum_{q=1}^\infty \frac{a_q}{q!} H_q(\xi_i),
\]
where $H_q$ is the $q$-th order Hermite polynomial. We define the Hermite rank of the function $G$ as 
\[
  m=\min\{q\geq 1\,:\, a_q\neq 0 \}
\]
and introduce the normalization constants
\begin{equation}
d_n^2=\Var\left(\sum_{j=1}^n H_m(\xi_i)\right).
\label{eq:dn1}
\end{equation}
We suppose $0<D<\frac{1}{m}$, in which case 
\begin{equation}
d_n^2\sim \kappa_m\, n^{2-mD}\, L^m(n),
\label{eq:dn2}
\end{equation}
where $\kappa_m=2(m!)/(1-Dm)(2-Dm)$. Here we use the symbol $a_n\sim b_n$ to denote 
$a_n/b_n\rightarrow 0$ as $n\rightarrow \infty$.

Under the null hypothesis $H$ of no level shift, we get that the process 
$(D_n(\lambda))_{0\leq \lambda\leq 1}$ in (\ref{eq:dn-lambda})
converges in distribution towards the process
\begin{equation}
  \frac{a_m}{m!}(\lambda Z_m(1) -  Z_m(\lambda))_{0\leq \lambda\leq 1},
\label{eq:lim-1}
\end{equation}
where $m$ denotes the Hermite rank of $G$ and where 
\begin{equation}
 a_m=E(H_m(\xi)G(\xi));
\label{eq:a-m}
\end{equation}
see Dehling, Rooch and Taqqu (2013), proof of Theorem~3. The process $(Z_m(\lambda))_{\lambda \geq 0}$ denotes the
$m$-th order Hermite process with Hurst parameter $H=1-\frac{Dm}{2}\in (\frac{1}{2},1)$. It is Gaussian
(namely fractional Brownian motion) when $m=1$, but it is non-Gaussian when $m\geq 2$. For various
representations of the Hermite process $(Z_m(\lambda))_{\lambda \geq 0}$, see Pipiras and Taqqu (2010).

In view of (\ref{eq:Xeps}), under the alternative $A$ in (\ref{eq:A-alt}), we need to consider
\begin{equation}
  D_n(\lambda)=\frac{1}{n\, d_n} \sum_{i=1}^{[n\lambda]} \sum_{j=[n\lambda]+1}^n
 (G(\xi_j)-G(\xi_i)) + \frac{1}{n\, d_n} \sum_{i=1}^{[n\lambda]} \sum_{j=[n\lambda]+1}^n
 (\mu_j-\mu_i).
\label{eq:dn-alt}
\end{equation}
Observe that the statistic $D_n(\lambda)$ presumes that the jump occurs at time $[n\lambda]+1$, whereas the local alternative $A_{\tau,h_n}(n)$ involves a jump at $[n\tau]+1$. There will therefore be an interplay between $\lambda$ and $\tau$. In fact, under the local alternative $A_{\tau,h_n}(n)$ in (\ref{eq:local-alt}), we get 
\begin{equation}
  \frac{1}{n\, d_n} \sum_{i=1}^{[n\lambda]} \sum_{j=[n\lambda]+1}^n
 (\mu_j-\mu_i)
 =\left\{
 \begin{array}{ll}
   \frac{h_n}{n\, d_n} [\lambda n](n-[\tau n]) & \mbox{ for } \lambda \leq \tau\\
  \frac{h_n}{n\, d_n}(n-[\lambda n])[\tau n] & \mbox{ for } \lambda \geq \tau.
 \end{array}
 \right.
\label{eq:exp-alt}
\end{equation}
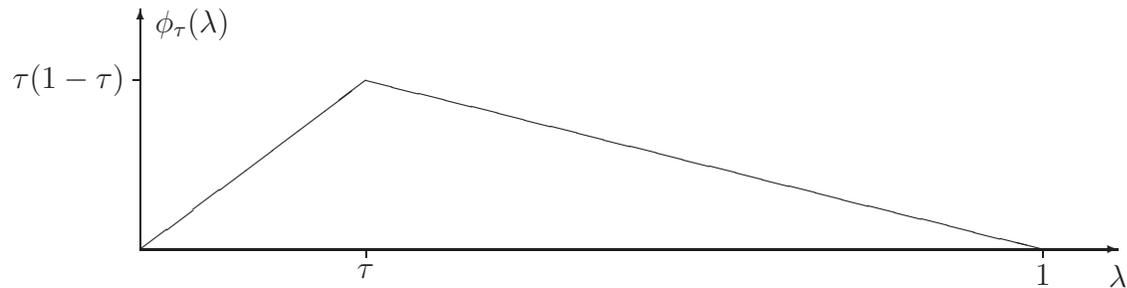
\begin{figure}[h]
\setlength{\unitlength}{1mm}
\begin{picture}(130,35)(0,0)
\put(0,0){\vector(1,0){130}}
\put(0,0){\vector(0,1){32}}
\put(0,0){\line(4,3){30}}
\put(30,22.5){\line(4,-1){90}}
\put(30,0){\line(0,-1){1}}
\put(-2,22.5){\makebox(0,0)[r]{$\tau(1-\tau)$}}
\put(0,22.5){\line(-1,0){1}}
\put(30,-2){\makebox(0,0)[t]{$\tau$}}
\put(120,0){\line(0,-1){1}}
\put(120,-2){\makebox(0,0)[t]{$1$}}
\put(130,-2){\makebox(0,0)[t]{$\lambda$}}
\put(2,30){\makebox(0,0)[l]{$\phi_\tau(\lambda)$}}
\end{picture}
\caption{Graph of the function $\phi_\tau$; see (\ref{eq:phi-tau}).}
\label{fig:phi-tau}
\end{figure}

We introduce the function $\phi_\tau:[0,1] \rightarrow \R$ by
\begin{equation}
 \phi_\tau(\lambda)=\left\{
 \begin{array}{ll}
  \lambda(1-\tau) & \mbox{ for } \lambda \leq \tau\\
  (1-\lambda)\tau & \mbox{ for } \lambda \geq \tau,
 \end{array}
 \right.
\label{eq:phi-tau}
\end{equation}
which takes its maximum value $\tau(1-\tau)$ at $\lambda=\tau$; see Figure~\ref{fig:phi-tau}.
Note that for large $n$, we get
\begin{equation}
 \frac{1}{n\, d_n} \sum_{i=1}^{[n\lambda]} \sum_{j=[n\lambda]+1}^n
 (\mu_j-\mu_i) \sim \frac{n\, h_n}{d_n} \, \phi_\tau(\lambda).
\label{eq:mu-diff}
\end{equation}
Thus, in order for the second term in (\ref{eq:dn-alt}) to converge as $n\rightarrow\infty$, we have to choose the level shift $h_n\sim c\, d_n/n$. When $n$ is large, this is exactly the order of the level shift that can be detected with a nontrivial power, that is with a power which is neither $0$ nor $1$.

\begin{theorem}
Let $(\xi_i)_{i\geq 1}$ be a stationary Gaussian process with mean zero, variance $1$ and autocovariance function as in (\ref{eq:acf}) with $0<D<\frac{1}{m}$. Moreover, let $G:\R\rightarrow \R$ be a measurable function satisfying $E(G^2(\xi))<\infty$ and define $X_i=\mu_i+G(\xi_i)$. Then under 
 the local alternative $A_{\tau,h_n}(n)$ with 
\begin{equation}
 h_n\sim \frac{d_n}{n}\, c,
\label{eq:hn}
\end{equation}
for an arbitrary constant $c$, the process  
$(D_n(\lambda))_{0\leq \lambda \leq 1}$ in (\ref{eq:dn-alt}) converges in distribution to the process
\begin{equation}
 \left( \frac{a_m}{m!}\left(\lambda Z_m(1)-Z_m(\lambda)\right)+c\, \phi_\tau (\lambda)\right)_{0\leq \lambda \leq 1},
\label{eq:dif-alt}
\end{equation}
where $(Z_m(\lambda))_{\lambda \geq 0}$ denotes the $m$-th order Hermite process with Hurst parameter
$H=1-\frac{Dm}{2}\in (\frac{1}{2},1)$, where $a_m$ is given by (\ref{eq:a-m}) and $\phi_\tau(\lambda)$ by
(\ref{eq:phi-tau}).
\label{th:dif-alt}
\end{theorem}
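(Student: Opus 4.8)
The plan is to exploit the additive decomposition already displayed in (\ref{eq:dn-alt}): write $D_n(\lambda)=N_n(\lambda)+M_n(\lambda)$, where
\[
 N_n(\lambda)=\frac{1}{n\,d_n}\sum_{i=1}^{[n\lambda]}\sum_{j=[n\lambda]+1}^n\bigl(G(\xi_j)-G(\xi_i)\bigr)
\]
is the random noise part and
\[
 M_n(\lambda)=\frac{1}{n\,d_n}\sum_{i=1}^{[n\lambda]}\sum_{j=[n\lambda]+1}^n(\mu_j-\mu_i)
\]
is the deterministic contribution of the means. The crucial observation is that $N_n(\lambda)$ does not depend on the means $\mu_i$ at all: it is exactly the process one obtains under the null hypothesis $H$, since any common level cancels in the difference $X_j-X_i$. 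I would therefore treat the two summands separately and recombine them at the end.

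For the noise part I would simply invoke the functional limit theorem already established under the null hypothesis. By Dehling, Rooch and Taqqu (2013) (see the discussion around (\ref{eq:lim-1})) one has
\[
 N_n\;\claw\;\frac{a_m}{m!}\bigl(\lambda Z_m(1)-Z_m(\lambda)\bigr)_{0\le\lambda\le1}
\]
in $D[0,1]$, with a limit process that has continuous sample paths. This is the analytically demanding ingredient, resting on the Hermite expansion of $G$, the reduction principle, and the non-central limit theorem for the $m$-th Hermite polynomial of an LRD Gaussian sequence in the regime $0<D<\frac1m$; since it is already available, I would quote it rather than reprove it.

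For the deterministic part I would verify that $M_n\to c\,\phi_\tau$ \emph{uniformly} on $[0,1]$. Substituting the local level shift $h_n\sim c\,d_n/n$ into (\ref{eq:exp-alt}) gives, for $\lambda\le\tau$, $M_n(\lambda)\sim \frac{c}{n^2}[n\lambda](n-[n\tau])\to c\,\lambda(1-\tau)$, and symmetrically $M_n(\lambda)\to c\,(1-\lambda)\tau$ for $\lambda\ge\tau$; this is precisely $c\,\phi_\tau(\lambda)$, as already recorded in (\ref{eq:mu-diff}). The convergence is uniform because $|[n\lambda]/n-\lambda|\le 1/n$ and $|[n\tau]/n-\tau|\le 1/n$ hold uniformly in $\lambda$, so the approximation error is $O(1/n)$ independently of $\lambda$.

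Finally I would combine the two pieces. Since $M_n$ is non-random and converges in the supremum norm to the continuous function $c\,\phi_\tau$, it converges jointly with $N_n$ (a non-random sequence convergent in $D[0,1]$ is automatically jointly convergent with any weakly convergent sequence). The addition map on $D[0,1]\times D[0,1]$ is continuous at every pair whose second coordinate lies in $C[0,1]$, and $c\,\phi_\tau$ is continuous, so the continuous mapping theorem (a functional Slutsky argument) yields
\[
 D_n=N_n+M_n\;\claw\;\frac{a_m}{m!}\bigl(\lambda Z_m(1)-Z_m(\lambda)\bigr)+c\,\phi_\tau(\lambda),
\]
which is the asserted limit. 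Given that the heavy non-central limit theorem is imported, the only point genuinely requiring care in this proof is this last functional Slutsky step, namely that adding a uniformly convergent deterministic perturbation preserves weak convergence in the Skorokhod space precisely because the perturbation's limit is continuous; this is standard, but worth stating explicitly.
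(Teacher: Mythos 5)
Your proposal is correct and follows essentially the same route as the paper: decompose $D_n(\lambda)$ into the noise term (distributed as under the null, hence converging to the fractional bridge by the result of Dehling, Rooch and Taqqu (2013)) plus the deterministic mean term, and show the latter converges uniformly to $c\,\phi_\tau$. The paper leaves the final recombination implicit, whereas you spell out the functional Slutsky step; this is a welcome clarification but not a different argument.
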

{\em Proof.}  We use the decomposition (\ref{eq:dn-alt}).  The first term on the right hand side has the same distribution as $D_n(\lambda)$ under the hypothesis, and thus converges in distribution to $\frac{a_m}{m!} (\lambda Z_m(1)-Z_m(\lambda))$.  Regarding the second term, we observe that by (\ref{eq:hn}) and  (\ref{eq:exp-alt}) we get
\begin{eqnarray*}
   \frac{1}{n\, d_n} \sum_{i=1}^{[n\lambda]} \sum_{j=[n\lambda]+1}^n
  (\mu_j-\mu_i)& \sim &\left\{
 \begin{array}{ll}
   \frac{c}{n^2} [\lambda n](n-[\tau n]) & \mbox{ for } \lambda \leq \tau\\
     \frac{c}{n^2}(n-[\lambda n])[\tau n] & \mbox{ for } \lambda \geq \tau.
 \end{array}
 \right. \\
 &\rightarrow &c\,\phi_\tau(\lambda),
\end{eqnarray*}
uniformly in $\lambda \in [0,1]$, as $n\rightarrow \infty$.
\hfill $\Box$
\begin{remark}{\rm
(i) Observe that for $c=0$ we recover the limit distribution under the null hypothesis. Thus, Theorem~\ref{th:dif-alt} is a generalization of the results obtained previously under the null hypothesis. The limit process is a fractional bridge process. When $m=1$, this process is a fractional Gaussian bridge. For $m>1$, the process is non-Gaussian.
\\[1mm]
(ii) Under the local alternative, i.e. when $c\neq 0$, the limit process is the sum of a fractional bridge process 
and the deterministic function $c\, \phi_\tau$.  
}
\end{remark}

As an application of the continuous mapping theorem, we obtain the following corollary.
\begin{corollary} \label{cor:Dn}
Under the alternative $A_{\tau,h_n}(n)$ with $h_n\sim \frac{d_n}{n}c$, $D_n$ as defined in 
(\ref{eq:dif}) converges in distribution to 
\begin{equation}
   \sup_{0\leq \lambda\leq 1}
  \left|\frac{a_m}{m!} \left(\lambda Z_m(1)-Z_m(\lambda)\right)+c\, \phi_\tau (\lambda) \right|.
\label{eq:sup1}
\end{equation}
\end{corollary}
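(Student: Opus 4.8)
The plan is to recognize the statistic $D_n$ as the image of the process $(D_n(\lambda))_{0\le\lambda\le1}$ under the supremum functional, and then to apply the continuous mapping theorem to the process convergence already established in Theorem~\ref{th:dif-alt}.

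First I would note that $D_n(\lambda)$ in (\ref{eq:dn-alt}) depends on $\lambda$ only through $[n\lambda]$ and is therefore constant on each interval $[k/n,(k+1)/n)$. The choices $k=0$ and $k=n$ leave one of the two sums empty and hence contribute the value $0$, so the supremum of $|D_n(\lambda)|$ over $\lambda\in[0,1]$ is attained among $k\in\{1,\dots,n-1\}$. This yields the exact identity
\[
 D_n=\sup_{0\le\lambda\le1}\left|D_n(\lambda)\right|=\Phi\big((D_n(\lambda))_{0\le\lambda\le1}\big),
\]
where $\Phi:D[0,1]\to\R$ denotes the functional $\Phi(f)=\sup_{0\le\lambda\le1}|f(\lambda)|$.

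Next I would record the relevant continuity of $\Phi$. Since $|\Phi(f)-\Phi(g)|\le\sup_\lambda|f(\lambda)-g(\lambda)|$, the map $\Phi$ is Lipschitz in the uniform norm and hence continuous on $C[0,1]$; equipping $D[0,1]$ with the Skorokhod topology, $\Phi$ is then continuous at every $f\in C[0,1]$, because on the subspace of continuous functions the Skorokhod and uniform topologies agree. By Theorem~\ref{th:dif-alt}, $(D_n(\lambda))_{0\le\lambda\le1}$ converges in distribution in $D[0,1]$ to
\[
 L(\lambda)=\frac{a_m}{m!}\left(\lambda Z_m(1)-Z_m(\lambda)\right)+c\,\phi_\tau(\lambda).
\]
As the Hermite process $Z_m$ admits a version with continuous sample paths and $\phi_\tau$ is continuous, $L$ lies in $C[0,1]$ almost surely, so the discontinuity set of $\Phi$ is null under the law of $L$. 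The continuous mapping theorem therefore gives $D_n=\Phi(D_n(\cdot))\claw\Phi(L)=\sup_{0\le\lambda\le1}|L(\lambda)|$, which is precisely (\ref{eq:sup1}).

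The only point demanding care is matching the continuity of $\Phi$ to the topology in which Theorem~\ref{th:dif-alt} supplies convergence: if that convergence holds in the Skorokhod $J_1$ topology rather than uniformly, one must invoke the almost sure path-continuity of $L$ so that $\Phi$ is continuous on a set of full limiting probability. Beyond this topological bookkeeping I expect no genuine obstacle, the remaining steps being routine.
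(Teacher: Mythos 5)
Your proposal is correct and follows exactly the route the paper takes: the paper derives the corollary in one line as "an application of the continuous mapping theorem" to Theorem~\ref{th:dif-alt}, and your write-up simply supplies the routine details (identifying $D_n$ with the supremum functional applied to $D_n(\cdot)$ and checking continuity of that functional at the a.s.\ continuous limit). No discrepancy with the paper's argument.
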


\begin{remark}{\rm
(i) The limit distribution (\ref{eq:sup1}) depends on the constant $c$. For $c=0$, we obtain the limit distribution under the null hypthesis. Quantiles of this limit distribution were calculated numerically via a Monte-Carlo simulation by  Dehling, Rooch and Taqqu (2013), Table~1.  Increasing the value of $|c|$ leads to a shift of the distribution to the right. If $c=\infty$,
that is, if $h_n$ tends slower to zero than $\frac{d_n}{n}c$ for any $c>0$, then the correct normalization
for $D_n(\lambda)$ should go to $\infty$ at a higher rate which would kill the random part
$(\lambda Z_m(1)-Z_m(\lambda))$ in (\ref{eq:dif-alt}), and hence the level shift could be detected precisely. The power of the asymptotic test would be equal to $1$ in this case. 
\\[1mm]
(ii) For a given $\tau\in [0,1]$, the function $\phi_\tau(\lambda)$ takes its maximum value in $\lambda=\tau$, and this maximum value equals $\tau(1-\tau)$. Thus, for values of $\tau$ close to $0$ and close to $1$, $\tau(1-\tau)$ is close to $0$, and thus the effect of adding the term $c\phi_\tau(\lambda)$ is rather small. As a result, the power of the test is small at  level shifts that occur  very early or very late in the process. 
\\[1mm]
(iii) The higher the level shift and the closer $\lambda$ is to $\tau$, the easier it is to detect the level shift.
\\[1mm]
(iv) If the observations are short-range dependent, one can typically detect level shifts $h_n$ of size 
$\frac{\sqrt{n}}{n}=\frac{1}{\sqrt{n}}$, but here, because of long-range dependence, the level shifts that can be  detected are of smaller order
$\frac{d_n}{n}\sim \frac{cn ^{1-Dm/2} L(n)}{n} = c n^{-Dm/2} L(n)$; note that $Dm<1$.
}
\end{remark}

We will now apply Corollary~\ref{cor:Dn} in order to make power calculations for the change-point test that rejects for large values of $D_n$. Under the null hypothesis of no level shift,
\[
 D_n \claw \sup_{0\leq \lambda \leq 1} \frac{|a_m|}{m!} | \lambda Z_m(1)-Z_m(\lambda)|.
\]
If we denote by $q_\alpha$ the upper $\alpha$-quantile of the distribution of 
$ \sup_{0\leq \lambda\leq 1}  |\lambda Z_m(1)-Z_m(\lambda)|$,
we obtain
\[
  \lim_{n\rightarrow \infty} P_H \left(D_n\geq \frac{|a_m|}{m!} q_\alpha\right)
  =P\left(\sup_{0\leq \lambda \leq 1} \frac{|a_m|}{m!} |\lambda Z_m(1) -Z_m(\lambda)|
 \geq \frac{|a_m|}{m!} q_\alpha \right)=\alpha,
\]
where $P_H$ indicates the probability under the null hypothesis $H$.
Thus, the test that rejects the null hypothesis $H$ when $D_n\geq \frac{|a_m|}{m!} q_\alpha$ 
has asymptotic level $\alpha$.
If $h_n$ is chosen as in (\ref{eq:hn}), we obtain under the alternative $A_{\tau,h_n}(n)$
\begin{equation}
\lim_{n\rightarrow \infty} P_{A_{\tau,h_n}(n)}\left(D_n\geq \frac{|a_m|}{m!} q_\alpha\right) = P\left(\sup_{0\leq \lambda\leq 1}
  \left|\frac{a_m}{m!} (\lambda Z_m(1)-Z_m(\lambda))+c\, \phi_\tau (\lambda)\right| \geq \frac{|a_m|}{m!}  q_\alpha\right).
\label{eq:d-power}
\end{equation}
Thus, for large $n$, the power of our test at the alternative $A_{\tau,h_n}(n)$ is approximately given by 
the right-hand side of (\ref{eq:d-power}).

We may also apply Corollary~\ref{cor:Dn} in order to determine the size of a level shift at time $[\tau n]$ that can be detected with a given probability $\beta$.  First, we calculate $c=c(\alpha,\beta)$ such that
\begin{equation}
  P\left(\sup_{0\leq \lambda\leq 1}
  \left|\frac{a_m}{m!} (\lambda Z_m(1)-Z_m(\lambda))+c\, \phi_\tau (\lambda)\right| \geq \frac{|a_m|}{m!}
q_\alpha\right) =  \beta.
\label{eq:dif-h}
\end{equation}
Thus, by (\ref{eq:d-power}),  we get that  the asymptotic power of the test at the alternative $A_{\tau,h_n}(n)$ is equal to $\beta$. Thus, given a sample size $n$, we can detect a level shift of size $h_n=\frac{d_n}{n} c(\alpha,\beta)$ 
at time $[\tau n]$ with probability $\beta$ with a level $\alpha$ test based on the test statistic $D_n$.

\section{Power of the Wilcoxon Change-Point Test under Local Alternatives}
In the context of the Wilcoxon change-point test, the Hermite rank is not that of the function $G$, but of
the class of functions 
\begin{equation}
  1_{\{ G(\xi_i)\leq x \}}-F(x),\; x\in \R,
\label{eq:class}
\end{equation}
where $F(x)=E(1_{\{ G(\xi_i) \leq x \}}) =P(G(\xi_i)\leq x)$. We define the Hermite expansion of the class 
of functions (\ref{eq:class}) as
\[
  1_{\{ G(\xi_i) \leq x \}} -F(x) =\sum_{q=1}^\infty \frac{J_q(x)}{q!} H_q(\xi_i),
\]
where $H_q$ is again the $q$-th order Hermite polynomial and where the coefficients are
\begin{equation}
 J_q(x)=E\left( H_q(\xi_i)1_{\{G(\xi_i) \leq x \}}\right).
\label{eq:J-m}
\end{equation}
We define the Hermite rank of the class of functions (\ref{eq:class}) as
\[
  m:=\min\{q\geq 1: J_q(x)\neq 0 \mbox{ for some } x\in \R\}.
\]

\begin{theorem}
Suppose that $(\xi_i)_{i\geq 1}$ is a stationary Gaussian process with mean zero, variance $1$ and autocovariance function as in (\ref{eq:acf}) with $0\leq D<\frac{1}{m}$. Moreover, let $G:\R\rightarrow \R$ 
be a measurable function, and assume that $G(\xi_k)$ has continuous distribution function $F(x)$. Let
$m$ denote the Hermite rank of the class of functions (\ref{eq:class}), let $d_n$ be as in 
(\ref{eq:dn2}), and let the level shift $h_n$ be as in (\ref{eq:hn}). Then, under the  sequence of alternatives $A_{\tau, h_n}$,
defined in  (\ref{eq:local-alt}),
if $h_n\rightarrow 0$ as $n\rightarrow \infty$, the process
\begin{equation}
\left( \frac{1}{n\, d_n} \sum_{i=1}^{[n\lambda]} \sum_{j=[n\lambda]+1}^n 
\left( 1_{\{X_i\leq X_j\}} - \frac{1}{2} \right)
 -\frac{n}{d_n} \phi_\tau(\lambda) \int_\R (F(x+h_n)-F(x)) dF(x)\right)_{0\leq \lambda \leq 1}
\label{eq:wil-alt}
\end{equation}
converges in distribution towards the process 
\begin{equation}
 \left(\frac{\int_\R J_m(x) dF(x)}{m!}\big(Z_m(\lambda)-\lambda Z_m(1)\big) \right)_{0\leq \lambda \leq 1},
\label{eq:lim-null}
\end{equation}
where $(Z_m(\lambda))_{\lambda \geq 0}$ denotes the $m$-th order Hermite process with Hurst parameter
$H=1-\frac{Dm}{2}\in (\frac{1}{2},1)$ and where $J_m(x)$ is defined as in (\ref{eq:J-m}).
\label{th:wil-alt}
\end{theorem}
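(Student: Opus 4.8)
The plan is to reduce the statement to the null-hypothesis limit theorem by peeling off the deterministic effect of the level shift. Without loss of generality I take $\mu=0$, since adding a common constant to all observations changes no indicator $1_{\{X_i\le X_j\}}$; thus $X_i=Y_i+h_n\,1_{\{i>[n\tau]\}}$ with $Y_i=G(\xi_i)$. In the double sum over $i\le[n\lambda]<j$ the two index blocks are separated by $[n\lambda]$, so a pair can straddle the change-point $[n\tau]$ only in the configuration ``$i$ before, $j$ after'', in which case $1_{\{X_i\le X_j\}}=1_{\{Y_i\le Y_j+h_n\}}$; every other pair compares two observations on the same side of $[n\tau]$ and contributes the unshifted $1_{\{Y_i\le Y_j\}}$. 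Writing $1_{\{Y_i\le Y_j+h_n\}}=1_{\{Y_i\le Y_j\}}+\psi_{h_n}(Y_i,Y_j)$ with $\psi_{h_n}(x,y)=1_{\{y<x\le y+h_n\}}$, the inner double sum becomes
\[
\sum_{i=1}^{[n\lambda]}\sum_{j=[n\lambda]+1}^{n}\Bigl(1_{\{Y_i\le Y_j\}}-\tfrac12\Bigr)\;+\;\sum_{\mathrm{cross}}\psi_{h_n}(Y_i,Y_j),
\]
where the first sum is exactly the Wilcoxon double sum built from the stationary noise $(Y_i)$ and the second runs only over straddling pairs.

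For the first sum I would invoke the null-hypothesis analysis of Dehling, Rooch and Taqqu (2013): after division by $n\,d_n$ it converges in distribution, as a process in $\lambda$, to $\frac{\int_\R J_m(x)\,dF(x)}{m!}\bigl(Z_m(\lambda)-\lambda Z_m(1)\bigr)$, which is the asserted limit (\ref{eq:lim-null}). Everything else must then be shown to be asymptotically deterministic and equal to the subtracted centering. For the second sum I would use the Hoeffding decomposition of the kernel $\psi_{h_n}$ with respect to the law $F$: its mean is $\delta_n:=\int_\R(F(x+h_n)-F(x))\,dF(x)$, its one-dimensional projections are $\psi_1(x)=F(x)-F(x-h_n)-\delta_n$ and $\psi_2(y)=F(y+h_n)-F(y)-\delta_n$, and $g:=\psi_{h_n}-\delta_n-\psi_1-\psi_2$ is completely degenerate. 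The number of straddling pairs equals $[n\lambda](n-[n\tau])$ for $\lambda\le\tau$ and $[n\tau](n-[n\lambda])$ for $\lambda\ge\tau$, and in both cases equals $n^2\phi_\tau(\lambda)+O(n)$; hence the constant part of the decomposition contributes $\frac1{n\,d_n}\bigl(n^2\phi_\tau(\lambda)+O(n)\bigr)\delta_n=\frac n{d_n}\phi_\tau(\lambda)\,\delta_n+O(\delta_n/d_n)$, and since $\delta_n$ is bounded and $d_n\to\infty$ the error is uniformly negligible. This reproduces precisely the centering subtracted in (\ref{eq:wil-alt}).

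It remains to show that the projection terms and the degenerate term are $o_P(n\,d_n)$, uniformly in $\lambda$. The crucial structural fact is that $\psi_1\circ G$ and $\psi_2\circ G$ have Hermite rank at least $m$: expanding $1_{\{G(\xi)\le a\}}-F(a)=\sum_{q\ge m}\frac{J_q(a)}{q!}H_q(\xi)$ and integrating against $dF$ shows that the order-$q$ Hermite coefficient of $\psi_2\circ G$ equals $\int_\R\bigl(J_q(x)-J_q(x-h_n)\bigr)\,dF(x)$, which vanishes for $q<m$ and, by continuity of $F$ and dominated convergence, tends to $0$ for every $q\ge m$ as $h_n\to0$; moreover $\|\psi_1\|_{L^2(dF)}$ and $\|\psi_2\|_{L^2(dF)}$ tend to $0$. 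Since $|\rho(k)|\le1$ gives $\sum_{i,i'=1}^{n}|\rho(i-i')|^p\le\sum_{i,i'=1}^{n}|\rho(i-i')|^m=O(d_n^2)$ for all $p\ge m$, it follows that $\Var\bigl(\sum_{i=1}^{[n\lambda]}\psi_1(Y_i)\bigr)\le C\,\|\psi_1\|_{L^2(dF)}^2\,d_n^2=o(d_n^2)$, so the projection contributions, whose bounded prefactors are $\frac{n-[n\tau]}n$ and $\frac{[n\lambda]}n$, tend to $0$ uniformly. For the degenerate term I would bound its second moment by the bivariate Hermite (diagram) expansion of $g(G(\xi_i),G(\xi_j))$, whose total coefficient mass is $\|\psi_{h_n}\|_{L^2(F\otimes F)}^2=\delta_n$ and whose $\xi_i$-marginal rank is at least $m$; the dominant (diagonal) pairing yields a variance of order $\delta_n\,d_n^2\sum_{j,j'=1}^n|\rho(j-j')|=O\bigl(\delta_n\,n^{-D}L(n)\,(n\,d_n)^2\bigr)=o\bigl((n\,d_n)^2\bigr)$.

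I expect the degenerate term to be the main obstacle. One must verify that the off-diagonal pairings in the diagram formula --- which couple indices lying on opposite sides of the change-point and hence are not uniformly separated when $\lambda$ is close to $\tau$ --- contribute only at the same or a lower order, and one must upgrade the pointwise-in-$\lambda$ variance bounds to uniform statements (e.g.\ by a chaining or maximal-inequality argument) in order to secure tightness and thus the functional convergence. Once these error processes are shown to vanish uniformly, adding back the null limit of the first sum gives the limit (\ref{eq:lim-null}).
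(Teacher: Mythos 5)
Your opening moves coincide exactly with the paper's: you split the double sum into the null-hypothesis Wilcoxon sum over the noise $G(\xi_i)$ plus a sum of $1_{\{Y_j<Y_i\le Y_j+h_n\}}$ over the straddling pairs, you count those pairs as $n^2\phi_\tau(\lambda)+O(n)$, and you dispose of the $O(n)$ error via $\delta_n/d_n\to0$; all of that matches equations (\ref{eq:rem-wil1})--(\ref{eq:rem-wil2}) and the surrounding estimates. Your Hermite-rank computation for the projections $\psi_1,\psi_2$ (coefficients $\int(J_q(x)-J_q(x\mp h_n))\,dF(x)$, vanishing for $q<m$ and tending to $0$ for $q\ge m$) is also correct and is the right structural observation. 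Where you diverge is in how the centered straddling sum is killed: you propose a Hoeffding decomposition with variance bounds term by term, whereas the paper rewrites the straddling sum as $[n\lambda](n-[n\tau])\int\bigl(F_{[n\lambda]}(x+h_n)-F_{[n\lambda]}(x)\bigr)\,dF_{[n\tau]+1,n}(x)$ and invokes the two-parameter empirical process non-central limit theorem of Dehling and Taqqu (1989), upgraded to almost-sure uniform convergence in $(x,\lambda)$ by the Dudley--Wichura/Skorohod representation.

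The difference matters, because the two points you yourself flag as "to be verified" are precisely where the entire difficulty of the LRD case is concentrated, and your sketch does not close them. First, the degenerate term: for i.i.d.\ data the $h_{3,n}(\epsilon_i,\epsilon_j)$ are uncorrelated and a one-line second-moment bound suffices (this is exactly what the paper does in its i.i.d.\ Section 5), but for subordinated LRD Gaussian data they are not, and the diagram formula for $E[g(Y_i,Y_j)g(Y_{i'},Y_{j'})]$ contains connected pairings carrying factors $\rho(i-j)^{l}$ with $i\le[n\lambda]<[n\tau]<j$ possibly adjacent, which do not decay and whose summation over all four indices is a genuinely nontrivial combinatorial estimate; computing only the diagonal pairing $\rho(i-i')^p\rho(j-j')^q$ does not establish the claim. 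Second, your variance bounds for the projection terms are pointwise in $\lambda$, and Kolmogorov-type maximal inequalities are unavailable here since the summands are dependent; passing to $\sup_\lambda$ requires either a chaining argument you have not supplied or, again, the uniform empirical-process convergence. The natural way to repair both gaps is to express the degenerate part as $[n\lambda](n-[n\tau])\int\bigl(F_{[n\lambda]}(x+h_n)-F_{[n\lambda]}(x)-F(x+h_n)+F(x)\bigr)\,d\bigl(F_{[n\tau]+1,n}-F\bigr)(x)$ and the projections as integrals of the empirical process against $dF$, and then apply the uniform reduction to $J(x)Z(\lambda)$ --- at which point your argument has become the paper's. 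So the proposal is a correct skeleton with the right centering and the right rank computations, but the key step that makes the theorem true for long-range dependent data is missing.
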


\begin{remark}{\rm
(i) The normalization $d_n$ and the processes $(Z_m(\lambda))_{\lambda \geq 0}$ in 
Theorem~\ref{th:dif-alt} and Theorem~\ref{th:wil-alt} are the same.
\\[1mm]
(ii) Since, by assumption, the distribution $F(x)$ of $G(\xi_k)$ is continuous, it follows from integration by parts that 
$\int_\R F(x) dF(x)=\frac{1}{2}$. This explains the $1/2$ in (\ref{eq:wil-alt}) because 
$\int_\R F(x) dF(x) =E(1_{\{X_1\leq X_1^\prime\}})$, where $X_1^\prime$  is an independent copy of $X_1$.
The independence assumption is reasonable as the dependence between $X_i$ and $X_j$ vanishes asymptotically when $|i-j|\rightarrow \infty$.
\\[1mm]
(iii)
As noted at the beginning of the proof, the first part  of (\ref{eq:wil-alt}) converges to (\ref{eq:lim-null}) under the null hypothesis. We show in the proof that the second part of (\ref{eq:wil-alt}) compensates for the 
presence of the alternatives $A_{\tau,h_n}$.
\\[1mm]
(iv) We make no assumption about the exact order of the sequence $(h_n)_{n\geq 1}$. Theorem~\ref{th:wil-alt} holds under the very general assumption that $h_n\rightarrow 0$, as $n\rightarrow \infty$. 
\\[1mm]
(v)
If we choose $(h_n)_{n\geq 1}$ as in (\ref{eq:hn}),  the centering constants in (\ref{eq:wil-alt}) converge, provided some technical assumptions are satisfied. To see this, observe that
\begin{eqnarray*}
\frac{n}{d_n}\phi_\tau(\lambda) \int_\R (F(x+h_n)-F(x)) dF(x)
 &\sim &\frac{n\, h_n}{d_n} \phi_\tau(\lambda) \int_\R\frac{F(x+h_n)-F(x)}{h_n} dF(x) \\
 &\rightarrow & c\, \phi_\tau(\lambda) \int_\R f(x) dF(x)\\
 &=& c\, \phi_\tau(\lambda) \int_\R f^2(x) dx.
\end{eqnarray*}
The convergence in the next to last step requires some justification -- this holds, e.g. if $F$ is differentiable with bounded derivative $f(x)$. 
}
\end{remark}
\begin{corollary}
Suppose that $(\xi_i)_{i\geq 1}$ is a stationary Gaussian process with mean zero, variance $1$ and autocovariance function as in (\ref{eq:acf}) with $0\leq D<\frac{1}{m}$. Moreover, let $G:\R\rightarrow \R$ 
be a measurable function, and assume that $G(\xi_k)$ has a distribution function $F(x)$ with bounded density $f(x)$. Let $m$ denote the Hermite rank of the class of functions 
$1_{\{G(\xi_i)\leq x  \}} -F(x)$, $x\in \R$. Then, under the  sequence of alternatives $A_{\tau, h_n}$,
defined in  (\ref{eq:local-alt}),
with $h_n \sim c\frac{d_n}{n}$  we obtain
that
\begin{equation}
\left(\frac{1}{n\, d_n} \sum_{i=1}^{[n\lambda]} \sum_{j=[n\lambda]+1}^n 
\left( 1_{\{X_i\leq X_j\}} - \frac{1}{2} \right)\right)_{0\leq \lambda \leq 1}
\label{eq:wil-alt2}
\end{equation}
converges in distribution to the process
\[
 \left( \frac{\int_\R J_m(x) dF(x)}{m!}(Z_m(\lambda)-\lambda Z_m(1))  + c\phi_\tau(\lambda)
 \int_\R f^2(x) dx\right)_{0\leq \lambda \leq 1}.
\]
\label{cor:wil-alt}
\end{corollary}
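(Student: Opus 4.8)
The plan is to read the corollary off Theorem~\ref{th:wil-alt} by adding back the deterministic centering term that was subtracted there, via a Slutsky-type argument in path space. Write $A_n(\lambda)$ for the (uncentered) process in (\ref{eq:wil-alt2}) and set
\[
B_n(\lambda):=\frac{n}{d_n}\,\phi_\tau(\lambda)\int_\R \big(F(x+h_n)-F(x)\big)\,dF(x),
\]
so that $A_n-B_n$ is exactly the process appearing in (\ref{eq:wil-alt}). First I would check that the hypotheses here imply those of Theorem~\ref{th:wil-alt}: a bounded density makes $F$ (absolutely) continuous, and $h_n\sim c\,d_n/n\to 0$ since $d_n/n\to 0$ by (\ref{eq:dn2}) together with $Dm<1$. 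Theorem~\ref{th:wil-alt} then gives
\[
A_n-B_n \claw \left(\frac{\int_\R J_m(x)\,dF(x)}{m!}\big(Z_m(\lambda)-\lambda Z_m(1)\big)\right)_{0\le\lambda\le 1}=:Y.
\]

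The core of the argument is to show that the deterministic sequence $B_n$ converges, \emph{uniformly} in $\lambda\in[0,1]$, to $B(\lambda):=c\,\phi_\tau(\lambda)\int_\R f^2(x)\,dx$. Writing
\[
B_n(\lambda)=\frac{n\,h_n}{d_n}\,\phi_\tau(\lambda)\int_\R \frac{F(x+h_n)-F(x)}{h_n}\,dF(x),
\]
I would use that $n\,h_n/d_n\to c$ by (\ref{eq:hn}) and that $\phi_\tau$ is bounded on $[0,1]$ by $\tau(1-\tau)\le 1/4$, so that the full $\lambda$-dependence sits in this bounded factor. It therefore suffices to prove the single analytic limit
\[
\int_\R \frac{F(x+h_n)-F(x)}{h_n}\,dF(x)\longrightarrow \int_\R f(x)\,dF(x)=\int_\R f^2(x)\,dx,
\]
the last equality holding because $dF=f\,dx$. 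Since $\phi_\tau(\lambda)$ is the only $\lambda$-dependent piece and it is bounded, the convergence of this (constant-in-$\lambda$) integral upgrades automatically to uniform convergence of $B_n$ to $B$ on $[0,1]$.

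This last limit is the only genuine estimate and is where the bounded-density hypothesis enters, so it is the main obstacle. I would argue as follows: from $F(x+h_n)-F(x)=\int_x^{x+h_n} f(t)\,dt$ the difference quotient is bounded by $\|f\|_\infty$ uniformly in $x$ and $n$, and it converges to $f(x)$ at every Lebesgue point of $f$, hence for $dF$-almost every $x$. Dominated convergence with the constant ($dF$-integrable) majorant $\|f\|_\infty$ then yields the claim; this is precisely the technical point flagged in Remark~(v). To finish, I would combine the two pieces by a function-space Slutsky argument: since $Y$ has continuous paths (the Hermite process $Z_m$ is continuous) and $B_n\to B$ in the supremum norm with $B$ continuous, adding the uniformly convergent deterministic sequence preserves weak convergence, so from $A_n-B_n\claw Y$ and $\|B_n-B\|_\infty\to 0$ we obtain $A_n=(A_n-B_n)+B_n\claw Y+B$, which is exactly the asserted limit. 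Everything beyond the dominated-convergence step is bookkeeping.
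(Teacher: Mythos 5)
Your proposal is correct and takes essentially the same route as the paper, which derives the corollary from Theorem~\ref{th:wil-alt} combined with the computation of the centering term
$\frac{n}{d_n}\phi_\tau(\lambda)\int_\R (F(x+h_n)-F(x))\,dF(x)\to c\,\phi_\tau(\lambda)\int_\R f^2(x)\,dx$
carried out in the remark preceding the corollary, plus an (implicit) Slutsky step. If anything, your dominated-convergence argument via Lebesgue points justifies that analytic limit under the stated bounded-density hypothesis slightly more carefully than the paper, which only notes that the convergence "holds, e.g., if $F$ is differentiable with bounded derivative."
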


{\em Proof of Theorem~\ref{th:wil-alt}.}  In our proof, we will make use of the limit theorem that was derived in Dehling, Rooch and Taqqu (2013) under the null hypothesis. They showed (see Theorem~1) that
\begin{equation}
\frac{1}{n\, d_n} \sum_{i=1}^{[n\lambda]} \sum_{j=[n\lambda]+1}^n 
\left( 1_{\{G(\xi_i)\leq G(\xi_j)\}} - \frac{1}{2} \right)
\rightarrow \frac{\int_\R J_m(x) dF(x)}{m!} (Z_m(\lambda)-\lambda Z_m(1)) .
\label{eq:wil-hyp}
\end{equation}
In order to make use of this result, we will decompose the test statistic into a term whose distribution is the same both under the null hypothesis as well as under the alternative, and a second term which, after proper centering converges to zero. As in Dehling, Rooch and Taqqu (2013), we will express the test statistic as a functional of the empirical distribution function of the $G(\xi_i)$, namely
\[
 F_k(x)=\frac{1}{k}\sum_{i=1}^k 1_{\{G(\xi_i)\leq x  \}}.
\]
Given integers $k,l$ with $k\leq l$ we denote by $F_{k,l}(x)$ the empirical distribution function based on  $G(\xi_k),\ldots, G(\xi_l)$, i.e.
\begin{equation}
 F_{k,l}(x)=\frac{1}{l-k+1} \sum_{i=k}^l 1_{\{G(\xi_i)\leq x\}}.
\label{eq:edf-fkl}
\end{equation}
Recall that under the local alternative, we have
\[
 X_i= \left\{ \begin{array}{ll} 
 G(\xi_i)+\mu  & \mbox{ for } i=1,\ldots,[n\tau]\\
 G(\xi_i)+\mu+h_n & \mbox{ for } i=[n\tau]+1,\ldots,n.
\end{array}
\right.
\]
Thus, we obtain for $\lambda \leq \tau$,
\begin{eqnarray}
\sum_{i=1}^{[n\lambda]} \sum_{j=[n\lambda]+1}^n 1_{\{X_i\leq X_j\}}
&=& 
\sum_{i=1}^{[n\lambda]} \sum_{j=[n\lambda]+1}^{[n\tau]} 1_{\{G(\xi_i)+\mu \leq G(\xi_j)+\mu  \}} 
+ \sum_{i=1}^{[n\lambda]} \sum_{j=[n\tau]+1}^n 1_{\{G(\xi_i)+\mu \leq G(\xi_j)+\mu+h_n  \}} \nonumber\\
&=& \sum_{i=1}^{[n\lambda]} \sum_{j=[n\lambda]+1}^{[n\tau]} 1_{\{G(\xi_i) \leq G(\xi_j)  \}}
 + \sum_{i=1}^{[n\lambda]} \sum_{j=[n\tau]+1}^n 1_{\{G(\xi_i) \leq G(\xi_j) +h_n\}}\nonumber\\
&=& \sum_{i=1}^{[n\lambda]} \sum_{j=[n\lambda]+1}^n 1_{\{G(\xi_i) \leq G(\xi_j)\}} 
+ \sum_{i=1}^{[n\lambda]} \sum_{j=[n\tau]+1}^n 
 \left( 1_{\{G(\xi_i) \leq G(\xi_j) +h_n\}} - 1_{\{G(\xi_i) \leq G(\xi_j)\}}
\right) \nonumber \\
&=& \sum_{i=1}^{[n\lambda]} \sum_{j=[n\lambda]+1}^n 1_{\{G(\xi_i) \leq G(\xi_j)\}} 
+ \sum_{i=1}^{[n\lambda]} \sum_{j=[n\tau]+1}^n 
   1_{\{G(\xi_j) < G(\xi_i) \leq G(\xi_j) +h_n\}}.
\end{eqnarray}
In the same way, we obtain for $\lambda \geq \tau$,
\begin{eqnarray}
&& \sum_{i=1}^{[n\lambda]} \sum_{j=[n\lambda]+1}^n 1_{\{X_i \leq  X_j  \}} \nonumber\\
&&\quad =\sum_{i=1}^{[n\tau]} \sum_{j=[n\lambda]+1}^n 
 1_{\{G(\xi_i)+\mu\leq G(\xi_j)+\mu+h_n  \}}
+\sum_{i=[n\tau]+1}^{[n\lambda]} \sum_{j=[n\lambda]+1}^n 1_{\{G(\xi_i)+\mu+h_n \leq G(\xi_j)+\mu+h_n  \}} \nonumber \\
&& \quad = \sum_{i=1}^{[n\tau]} \sum_{j=[n\lambda]+1}^n 1_{\{G(\xi_i) \leq G(\xi_j)+h_n  \}}
+\sum_{i=[n\tau]+1}^{[n\lambda]} \sum_{j=[n\lambda]+1}^n 1_{\{G(\xi_i) \leq G(\xi_j)  \}} \nonumber \\
&& \quad  = \sum_{i=1}^{[n\lambda]} \sum_{j=[n\lambda]+1}^n 1_{\{G(\xi_i) \leq G(\xi_j)\}} 
 + \sum_{i=1}^{[n\tau]} \sum_{j=[n\lambda]+1}^n 
 \left( 1_{\{G(\xi_i) \leq G(\xi_j) +h_n\}} - 1_{\{G(\xi_i) \leq G(\xi_j)\}} \right) \nonumber\\
&& \quad = \sum_{i=1}^{[n\lambda]} \sum_{j=[n\lambda]+1}^n 1_{\{G(\xi_i) \leq G(\xi_j)\}} 
 + \sum_{i=1}^{[n\tau]} \sum_{j=[n\lambda]+1}^n 1_{\{G(\xi_j)< G(\xi_i) \leq G(\xi_j) +h_n\}} 
\end{eqnarray}
Thus, in order to prove Theorem~\ref{th:wil-alt}, it suffices to show that the following two terms,
\begin{equation}
 \frac{1}{n\, d_n} \sup_{0\leq \lambda \leq \tau}
 \left|  
 \sum_{i=1}^{[n\lambda]} \sum_{j=[n\tau]+1}^n \!
 1_{\{G(\xi_j)< G(\xi_i) \leq G(\xi_j) +h_n\}} 
 -n^2 \lambda (1\!-\!\tau) \int_\R (F(x+h_n)\!-\!F(x)) dF(x)
\right| 
\label{eq:rem-wil1}
\end{equation}
and 
\begin{equation}
 \frac{1}{n\, d_n} \sup_{\tau \leq \lambda \leq 1}
 \left|  
 \sum_{i=1}^{[n\tau]} \sum_{j=[n\lambda]+1}^n \!
 1_{\{G(\xi_j)< G(\xi_i) \leq G(\xi_j) +h_n\}} 
 -n^2 \tau(1\!-\!\lambda) \int_\R (F(x+h_n)\!-\!F(x)) dF(x)
\right| 
\label{eq:rem-wil2}
\end{equation}
both converge to zero in probability. We first show this for (\ref{eq:rem-wil1}). Observe that 
\begin{eqnarray*}
&& \sum_{i=1}^{[n\lambda]} \sum_{j=[n\tau]+1}^n \! \! 
 1_{\{G(\xi_j)< G(\xi_i) \leq G(\xi_j) +h_n\}} -n^2\lambda(1-\tau)
\int_\R \left( F(x+h_n) - F(x) \right)dF(x) \\
&&= [n\lambda] \sum_{j=[n\tau]+1}^n \left( F_{[n\lambda]} (G(\xi_i)+h_n) -F_{[n\lambda]}(G(\xi_i)) \right)  \\
&& \quad -n^2\lambda(1-\tau) \int_\R \left( F(x+h_n) - F(x) \right)dF(x)  \\
&&=[n\lambda](n\!-\![n\tau]) \int_\R \left( F_{[n\lambda]}(x+h_n)-F_{[n\lambda]}(x) \right)
 dF_{[n\tau]+1,n}(x)\\
&& \quad -n^2\lambda(1-\tau) \int_\R \left( F(x+h_n) - F(x) \right)dF(x) \\
&&=[n\lambda](n\!-\![n\tau])\left(\int_\R  (F_{[n\lambda]}(x+h_n)-F_{[n\lambda]}(x)) 
 dF_{[n\tau]+1,n}(x) -\int_\R (F(x+h_n) - F(x)) dF(x) \right) \\
&&\quad + \left( [n\lambda](n-[n\tau]) -n^2 \lambda (1-\tau) \right)\int_\R (F(x+h_n) - F(x)) dF(x) 
\end{eqnarray*}
Note that $|[n\lambda](n-[n\tau]) -n^2 \lambda (1-\tau)|\leq n$ and $|\int_\R (F(x+h_n)-F(x)) dF(x)|\leq 1$. Thus 
\[
 \frac{1}{n\, d_n} \left( [n\lambda](n-[n\tau]) -n^2 \lambda (1-\tau) \right)\int_\R (F(x+h_n) - F(x)) dF(x) 
 \leq \frac{1}{d_n} \rightarrow 0,
\]
as $n\rightarrow \infty$. Hence, in order to show that (\ref{eq:rem-wil1}) converges to zero in probability, it suffices to show that 
\begin{equation}
 \frac{1}{n\, d_n}[n\lambda](n-[n\tau])\left(\int_\R  (F_{[n\lambda]}(x+h_n)-F_{[n\lambda]}(x)) 
 dF_{[n\tau]+1,n}(x) -\int_\R (F(x+h_n) - F(x)) dF(x) \right) 
\label{eq:rem-wil1a}
\end{equation} 
converges to zero, in probability.
In order to prove this, we rewrite the difference of the integrals in (\ref{eq:rem-wil1a}) as
\begin{eqnarray}
&& \int_\R \left( F_{[n\lambda]}(x+h_n) - F_{[n\lambda]}(x) \right) dF_{[n\tau]+1,n}(x)
 -\int_\R\left(F(x+h_n)-F(x)  \right) dF(x) 
\label{eq:final} \\
&& \quad=\int_\R\left( F_{[n\lambda]}(x+h_n)-F_{[n\lambda]}(x)  \right)
 -\left( F(x+h_n)-F(x) \right) dF_{[n\tau]+1,n} (x) \nonumber\\
 &&\qquad + \int_\R \left( F(x+h_n)-F(x) \right) d(F_{[n\tau]+1,n}-F)(x)\nonumber\\
 &&\quad=\int_\R\left( F_{[n\lambda]}(x+h_n)-F_{[n\lambda]}(x)  \right)
 -\left( F(x+h_n)-F(x) \right) dF_{[n\tau]+1,n} (x) \nonumber\\
  &&\qquad - \int_\R \left(F_{[n\tau]+1,n}(x)-F(x)   \right) d( F(x+h_n)-F(x)),
\nonumber
\end{eqnarray}
where we have used integration by parts in the final step. 
Thus, in order to prove that (\ref{eq:rem-wil1a}) converges to zero, it suffices to show that 
the following two terms converge in probability, as $n\rightarrow 0$,
\begin{eqnarray}
&& \frac{1}{d_n} [n\lambda] \int_\R \left( (F_{[n\lambda]}(x+h_n)-F_{[n\lambda]}(x) )
 -(F(x+h_n)-F(x))  \right) dF_{[n\tau]+1,n}(x) \rightarrow 0
\label{eq:rem-wil1aa} \\
&& \frac{1}{d_n} (n-[n\tau])
\int_\R \left( F_{[n\tau]+1,n}(x) -F(x) \right) d(F(x+h_n)-F(x)) \rightarrow 0.
\label{eq:rem-wil1ab}
\end{eqnarray}
In order to prove (\ref{eq:rem-wil1aa}) and (\ref{eq:rem-wil1ab}),  we now apply
the empirical process non-central limit theorem of Dehling and Taqqu (1989)
which states that
\[
 \left(d_n^{-1} [n\lambda] (F_{[n\lambda]}(x) - F(x))
 \right)_{x\in [-\infty, \infty], \lambda \in [0,1]}
 \claw (J(x) Z(\lambda))_{x\in [-\infty,\infty], \lambda\in [0,1]},
\]
where 
\[
 J(x)=J_m(x)=E\left( 1_{\{G(\xi_i)\leq x\}} H_m(\xi_i) \right)  \quad \mbox{and} \quad Z(\lambda)=\frac{Z_m(\lambda)}{m!}.
\]
By the Dudley-Wichura version of Skorohod's representation theorem
(see Shorack and Wellner (1986), Theorem~2.3.4) we
may assume without loss of generality that convergence holds 
almost surely 
with respect to the supremum norm on the function space 
$D([0,1]\times [-\infty,\infty])$, i.e. 
\begin{equation}
\sup_{\lambda\in [0,1], x\in \R }  \left| d_n^{-1} [n\lambda] (F_{[n\lambda]}(x) - F(x)) 
 -J(x) Z(\lambda)\right| \longrightarrow  0 \quad a.s.
 \label{eq:s-repr1} 
\end{equation}
Note that by definition, for any $\lambda \leq \tau$
\[
 ([n\tau]-[n\lambda]) (F_{[n\lambda]+1, [n\tau]} (x) -F(x))
 =[n\tau](F_{[n\tau]}(x) -F(x))-[n\lambda](F_{[n\lambda]}(x) - F(x)).
\]
Hence, we may deduce from (\ref{eq:s-repr1}) the following limit theorem for the empirical distribution of
the observations $X_{[n\lambda]+1},\ldots, X_{[n\tau]}$,
\begin{equation}
\sup_{0\leq \lambda \leq \tau, x\in \R}  
\left|d_n^{-1} ([n\tau]-[n\lambda]) (F_{[n\lambda]+1,[n\tau]}(x)-F(x)) -J(x)(Z(\tau)-Z(\lambda))
\right| \rightarrow 0,
\label{eq:s-repr2}
\end{equation}
almost surely.   
In the same way, we obtain 
\begin{equation}
\sup_{0\leq \lambda \leq 1, x\in \R} \left| d_n^{-1}(n-[n\lambda]) (F_{[n\lambda]+1,n}-F(x)) -
J(x)(Z(1)-Z(\lambda))  \right| \rightarrow 0, 
\label{eq:s-repr3}
\end{equation}
almost surely. Now we return to (\ref{eq:rem-wil1aa}) and write
\begin{eqnarray}
&& \left| \int_\R \frac{1}{d_n}[n\lambda]\left( (F_{[n\lambda]}(x+h_n)-F_{[n\lambda]} (x))
 -(F(x+h_n)-F(x))\right) dF_{[n\tau]+1,n}(x)  \right| \label{eq:rem-wil1aaf} \\
&&\quad \leq \left| \int_\R (J(x+h_n)-J(x)) Z(\lambda) dF_{[n\tau]+1,n}(x)  \right| \nonumber \\
 && \qquad +
 \sup_{x\in \R, 0\leq \lambda\leq 1} 
 \bigg|\frac{1}{d_n}[n\lambda] \big( (F_{[n\lambda]}(x+h_n) 
 - F_{[n\lambda]}( x)) -(F(x+h_n)-F(x))\big)\nonumber \\ 
&& \qquad
 -(J(x+h_n)-J(x))Z(\lambda) \bigg| \nonumber \\
 &&\quad \leq  \left|\int_\R (J(x+h_n)-J(x)) dF_{[n\tau]+1,n}(x)  \right|
 \, \sup_{0\leq \lambda \leq 1} |Z(\lambda)|
\nonumber\\
&& \qquad +
 \sup_{x\in \R, 0\leq \lambda\leq 1} \bigg|\frac{1}{d_n}[n\lambda] \left(
(F_{[n\lambda]}(x+h_n) - F_{[n\lambda]}( x)) - (F(x+h_n)-F(x)) \right)  
\nonumber \\
 &&\qquad  -(J(x+h_n)-J(x))Z(\lambda) \bigg|. \nonumber 
\end{eqnarray}
The second term on the right-hand side converges to zero by (\ref{eq:s-repr1}). Concerning the first term, note that 
\begin{equation}
J(x)=\int_\R 1_{\{G(y)\leq x  \}} H_m(y) \phi(y) dy 
 = - \int_\R 1_{\{x\leq G(y)  \}} H_m(y) \phi(y) dy,
\label{eq:j}
\end{equation}
where $\phi(y)=\frac{1}{\sqrt{2\pi}}  e^{-y^2/2}$ denotes the standard normal density function. For the second identity, we have used the fact that $G(\xi)$, by assumption, has a continuous distribution, and that $\int_\R H_m(y)\phi(y) dy=0$, for $m\geq 1$. Using (\ref{eq:j})  we thus obtain
\begin{eqnarray}
 \int_\R J(x) dF_{[n\tau]+1,n} (x)
 &=& -\int_\R \int_\R 1_{\{x\leq G(y)  \}} H_m(y) \phi(y) dy dF_{[n\tau]+1,n}(x) 
\label{eq:j-int1}\\
 &=& - \int_\R\int_\R 1_{\{x\leq G(y)\}} dF_{[n\tau]+1,n}(x) H_m(y) \phi(y) dy\nonumber \\
 &=& - \int_\R F_{[n\tau]+1,n} (G(y)) H_m(y) \phi(y) dy, \nonumber
\end{eqnarray}
and, using analogous arguments,
\begin{equation}
 \int_\R J(x+h_n) dF_{[n\tau]+1,n}(x) = - \int_\R 
 F_{[n\tau]+1,n} (G(y)-h_n) H_m(y) \phi(y) dy.
\label{eq:j-int2}
\end{equation}
By the Glivenko-Cantelli theorem, applied to the stationary, ergodic process $(G(\xi_i))_{i\geq 1}$, we get
$ \sup_{x\in \R} \left|F_n(x)-F(x)  \right| \rightarrow 0$, almost surely. Since
\[
 F_{[n\tau]+1,n}(x) =\frac{n}{n-[n\tau]} F_n(x) - \frac{[n\tau]}{n-[n\tau]} F_{[n\tau]}(x),
\]
we get that, almost surely,
\begin{equation}
 \sup_{x\in \R} \left|F_{[n\tau]+1,n} (x) -F(x)  \right| \rightarrow 0.
\label{eq:gc}
\end{equation}
Returning to the first term on the right-hand side of (\ref{eq:rem-wil1aaf}), we obtain, using
(\ref{eq:j-int1}) and (\ref{eq:j-int2}),
\begin{eqnarray*}
&& \left| \int_\R \left( J(x+h_n)-J(x) \right) dF_{[n\tau]+1,n} (x)  \right|\\
&&\quad= 
 \left| \int_\R \left(F_{[n\tau]+1,n}(G(y)-h_n)-F_{[n\tau]+1,n}(G(y))\right) 
H_m(y) \phi(y) dy \right| \\
&& \quad \leq  \int_\R \left| F(G(y) -h_n)-F(G(y)) \right| \left|H_m(y)  \right| \phi(y) dy \\
 &&\qquad + 2\sup_{x} \left|F_{[n\tau]+1,n}(x)-F(x) \right|
 \int_\R \left| H_m(y)   \right|\phi(y) dy.
 \end{eqnarray*}
Both terms on the right-hand-side converge to zero; the second one by 
(\ref{eq:gc}), the first one by continuity of $F$, the fact that $h_n\rightarrow 0$, and Lebesgue's dominated 
convergence theorem. In both cases, we have made use of the fact that 
$\int |H_m(y)|\phi(y)dy<\infty$. Thus we have finally established 
(\ref{eq:rem-wil1aa}). 
In order to prove (\ref{eq:rem-wil1ab}), we observe that
\begin{eqnarray*}
&& \frac{1}{d_n}(n-[n\tau]) \int_\R \left( F_{[n\tau]+1,n}(x)-F(x) \right) 
d(F(x+h_n)-F(x)) \\
&&\quad \leq 
\left| \int_\R J(x)(Z(\tau)-Z(1))d(F(x+h_n)-F(x))\right| \\
&& \qquad +\sup_{x\in \R} \left| \frac{1}{d_n}(n-[n\tau])
(F_{[n\tau]+1,n}(x) -F(x)) - J(x)(Z(\tau)-Z(1))  \right|\\
&&\quad \leq \left| \int_\R J(x) d(F(x+h_n)-F(x))  \right| \, |Z(\tau)-Z(1)|
\\
&& \qquad +\sup_{x\in \R} \left| \frac{1}{d_n}(n-[n\tau])
(F_{[n\tau]+1,n}(x) -F(x)) - J(x)(Z(\tau)-Z(1))  \right|.
\end{eqnarray*}
The second term on the right-hand side converges to zero, by (\ref{eq:s-repr3}). Concerning the first term, note that
\[
 \int_\R J(x) d(F(x+h_n)-F(x)) = E\left( J(G(\xi_i) - h_n)-J(G(\xi_i))\right).
\]
Applying Lebesgue's dominated convergence theorem and making use of the fact that, by assumption, $J$ is continuous, we obtain that $\int_\R J(x) d(F(x+h_n)-F(x)) \rightarrow 0$. In this way, we  have finally proved that (\ref{eq:rem-wil1}) converges to zero, in probability. By similar arguments, we can prove this for (\ref{eq:rem-wil2}), which finally ends the proof of Theorem~\ref{th:wil-alt}.
\hfill $\Box$

\section{ARE of the Wilcoxon Change-Point Test and the CUSUM Test for LRD Data}

In this section, we calculate the asymptotic relative efficiency (ARE) of the Wilcoxon change-point test 
with respect to the CUSUM test. To do so, we calculate the number of observations needed to detect a small level shift $h$ at time $[\tau\,n]$ with  a test of given level $\alpha$ and given power $\beta$, both for the Wilcoxon change-point test and the CUSUM test, and denote these numbers by $n_W$ and $n_C$, respectively. We then define the asymptotic relative efficiency of the Wilcoxon change-point test with respect to the CUSUM test by
\begin{equation}
 ARE(W,C)=\lim_{h\rightarrow 0}\frac{n_C}{n_W}.
\label{eq:are}
\end{equation}
It will turn out that the limit (\ref{eq:are}) exists and that the asymptotic relative efficiency does not depend on the choice of $\tau,\alpha,\beta$. If this limit is larger than $1$, then the CUSUM test requires a larger
sample size to detect the level shift, and hence the Wilcoxon change-point test is (asymptotically) more efficient.

In the remaining part of this section, we will focus on the case when $m=1$ both for the 
CUSUM as well as the Wilcoxon change-point test, i.e. when the Hermite rank of $G(\xi_1)$ and of
the class of functions $1_{\{G(\xi_1)\leq x\}}-F(x), x\in \R$, are both equal to $1$. This is the case, for example,  when $G$ is a strictly monotone function. In this case
\[
  \int_\R J_1(x) dF(x)=-\frac{1}{2\sqrt{\pi}},
\]
see Relation (20) in Dehling, Rooch and Taqqu (2013), showing that the Hermite rank of the class of functions  
$1_{\{G(\xi_1)\leq x\}}-F(x), x\in \R$ equals $1$. Focusing now on $G(\xi_1)$ and using integration by parts, we get that the first order Hermite coefficient $a_1$ of $G$ equals
\[
a_1=E(G(\xi_1)\xi_1) 
=\int_\R G(x) x \phi(x) dx
=-\int_\R G(x) \phi^\prime(x) dx 
= \int_\R \phi(x) dG(x) >0,
\]
where $\phi(x)=\frac{1}{\sqrt{2 \pi}}e^{-x^2/2}$ denotes the standard normal density function.
Thus, the Hermite rank of $G(\xi_i)$ equals $1$, as well.

In this case, i.e. when $m=1$, the Hermite process arising as limit in Theorem~\ref{th:dif-alt}, Theorem~\ref{th:wil-alt} and Corollary~\ref{cor:wil-alt}
is fractional Brownian motion $(B_H(\lambda))_{0\leq \lambda \leq 1}$. Note that fractional Brownian motion is symmetric, i.e. $(-B_H(\lambda))_{0\leq \lambda \leq 1}$ has the same distribution as $(B_H(\lambda))_{0\leq \lambda \leq 1}$. Thus the limit processes in Theorem~\ref{th:dif-alt} and Corollary~\ref{th:wil-alt} can also be written as
\begin{equation}
\left(|a_1|(B_H(\lambda)-\lambda B_H(1) ) +c\, \phi_\tau(\lambda)\right)_{0\leq \lambda \leq 1},
\label{eq:dif-alt2}
\end{equation}

\begin{equation}
\left(\left|\int_\R J_1(x) dF(x)\right|\left(B_H(\lambda)-\lambda B_H(1) \right) +c\, \phi_\tau(\lambda)
\int_\R f^2(x) dx \right)_{0\leq \lambda \leq 1}.
\label{eq:wil-alt2}
\end{equation}

As preparation, we first calculate a quantity that is related to the asymptotic relative efficiency, namely the ratio of the sizes of level shifts that can be detected by the two tests, based on the same number of observations $n$, again for given values of $\tau,\alpha,\beta$. We denote the corresponding level shifts by $h_W(n)$ and $h_C(n)$, respectively, assuming that these numbers depend on $n$ in the following way:
\begin{eqnarray}
 h_W(n)&=&c_W\frac{d_n}{n}
\label{eq:h-w} \\
 h_C(n)&=&c_C\frac{d_n}{n}.
\label{eq:h-c}
\end{eqnarray}
In order to simplify the following considerations, we take a one-sided change-point test, thus rejecting the hypothesis of no change-point for large values of 
\[
 \max_{1\leq k \leq n-1} \sum_{i=1}^k\sum_{j=k+1}^n (X_j-X_i) 
\]
or
\[
\max_{1\leq k \leq n-1} \sum_{i=1}^k \sum_{j=k+1}^n 1_{\{X_i\leq X_j \}},
\]
respectively. These are the appropriate tests when testing against the alternative of a non-negative level shift.
In order to obtain tests that have asymptotically level $\alpha$, the CUSUM test and the Wilcoxon
change-point test reject the null-hypothesis when 
\begin{eqnarray}
  && \frac{1}{n\, d_n\, |a_1|} \max_{1\leq k\leq n-1} \sum_{i=1}^k\sum_{j=k+1}^n (X_j-X_i) \geq q_\alpha,
\label{eq:test-dif}\\
 && \frac{1}{n\, d_n \, |\int_\R J_1(x) dF(x)|} \max_{k=1,\ldots,n-1} \sum_{i=1}^k \sum_{j=k+1}^n
 \left( 1_{\{X_i\leq X_j  \}} -\frac{1}{2} \right) \geq q_\alpha,
\label{eq:test-wil}
\end{eqnarray}
where $q_\alpha$ denotes the upper $\alpha$ quantile of the distribution of 
$\sup_{0\leq \lambda \leq 1} (B_H(\lambda)-\lambda B_H(1))$. This follows from Theorem~\ref{th:dif-alt}
and Corollary~\ref{cor:wil-alt} after applying the continuous mapping theorem. 
The constants $a_1$ and the functions $J_1(x)$ are defined in (\ref{eq:a-m}) and (\ref{eq:J-m}),
respectively, and have just been computed.
Under the sequence of alternatives
$A_{\tau,h_C(n)}$,  the asymptotic distribution of the test statistic in (\ref{eq:test-dif})  is given by
\[
 \sup_{0\leq \lambda \leq 1} \left(B_H(\lambda)-\lambda B_H(1) +\frac{c_C}{|a_1|}\phi_\tau(\lambda)
 \right);
\]
see Theorem~\ref{th:dif-alt}.  Under the sequence of alternatives $A_{\tau,h_W(n)}$, the asymptotic distribution of the test statistic in (\ref{eq:test-wil}) is given by
\[
 \sup_{0\leq \lambda \leq 1} \left(B_H(\lambda)-\lambda B_H(1) 
 +\frac{c_W\, \int_\R f^2(x) dx}{|\int_\R J_1(x) dF(x)|}\phi_\tau(\lambda)
 \right);
\]
see Corollary~\ref{cor:wil-alt}.
Thus, the asymptotic power of the CUSUM test is given by 
\begin{eqnarray}
&& \lim_{n\rightarrow\infty}
 P_{A_{\tau,h_C(n)}} \left(    \frac{1}{n\, d_n\,| a_1|} \max_{1\leq k\leq n-1} \sum_{i=1}^k\sum_{j=k+1}^n (X_j-X_i) \geq q_\alpha   \right)\nonumber \\
 && 
 \qquad =P\left( \sup_{0\leq \lambda \leq 1} \left(B_H(\lambda)-\lambda B_H(1) +\frac{c_C}{|a_1|}\phi_\tau(\lambda)
 \right)\geq q_\alpha \right).
\label{eq:power-D}
\end{eqnarray}
In the same way, we obtain the power of the Wilcoxon change-point test
\begin{eqnarray}
&& \lim_{n\rightarrow\infty}
 P_{A_{\tau,h_W(n)}} \left(    \frac{1}{n\, d_n \, |\int_\R J_1(x) dF(x)|} \max_{1\leq k\leq n-1} \sum_{i=1}^k \sum_{j=k+1}^n
 \left( 1_{\{X_i\leq X_j  \}} -\frac{1}{2} \right) \geq q_\alpha
   \right)\nonumber \\
&&\qquad =P\left( 
 \sup_{0\leq \lambda \leq 1} \left(B_H(\lambda)-\lambda B_H(1) 
 +\frac{c_W\,  \int_\R f^2(x) dx}{|\int_\R J_1(x) dF(x)|}\phi_\tau(\lambda)
 \right)
\geq q_\alpha \right).
\label{eq:power-W}
\end{eqnarray}
Thus, if we want the two tests to have identical power, we have to choose $c_C$ and $c_W$ in such a way 
that
\[
  \frac{c_C}{|a_1|}\phi_\tau(\lambda)=\frac{c_W\, \int_\R f^2(x) dx}{|\int_\R J_1(x) dF(x)|}\phi_\tau(\lambda),
\]
which again yields by (\ref{eq:h-w}) and (\ref{eq:h-c}), 
\begin{equation}
 \frac{h_C(n)}{h_W(n)}=\frac{c_C}{c_W}=\frac{|a_1| \int_\R f^2(x) dx}{|\int_\R J_1(x) dF(x)|}.
\label{eq:h-ratio}
\end{equation}
This quantity gives the ratio of the height of a level shift that can be detected by a CUSUM test over the height that can be detected by a Wilcoxon change-point test, when both tests are assumed to have the same level $\alpha$, the same power $\beta$ and the shifts are taking place at the same time $[n\tau]$. In addition, we assume that the tests are based on the same number of observations $n$, which is supposed to be large.

\begin{example}{\rm
In the case of Gaussian data, i.e.\ when $G(\xi)=\xi$, we have $m=1$, $a_1=E(\xi_1H_1(\xi_1))=E(\xi_1^2)=1$, $\int_\R f^2(x) dx =\int_\R \frac{1}{2\pi} e^{-x^2} dx 
=\frac{1}{2\sqrt{\pi}}$ and 
$\int_\R J_1(x) dF(x) = -\frac{1}{2\sqrt{\pi}}$; see Dehling, Rooch and Taqqu (2013), Relation (20). Thus we obtain
\begin{equation}
\frac{c_C}{c_W}=\frac{1/2\sqrt{\pi}}{1/2\sqrt{\pi}} =1.
\label{eq:are-gauss}
\end{equation} 
Hence, both tests can asymptotically, as $n\rightarrow \infty$, detect level shifts of the same height.
}
\label{ex:are-gauss}
\end{example}

The level shifts can be expressed in terms of 
\[
  \psi(t):= P\left(\sup_{0\leq \lambda\leq 1} (B_H(\lambda)-\lambda B_H(1) +t \, \phi_\tau(\lambda))
 \geq q_\alpha \right),
\] 
viewed as a function of $t$, for fixed values of $\tau$ and $\alpha$. The function $\psi$ is monotonically increasing. We define the generalized inverse,
\[
 \psi^{-}(\beta):=\inf \{t \geq 0: \psi(t)\geq \beta\}.
\]
Thus, we get 
\begin{equation}
 P\left(\sup_{0\leq \lambda\leq 1} \left(B_H(\lambda)-\lambda B_H(1) +\psi^{-}(\beta) \, \phi_\tau(\lambda)\right) 
 \geq q_\alpha\right) \geq \beta,
\label{eq:beta}
\end{equation}
and, in fact, for given $\tau$, $\alpha$ and $\beta$, $\psi^{-}(\beta)$ is the smallest number having this property. 

We can now apply Theorem~\ref{th:dif-alt} and Theorem~\ref{th:wil-alt}.
By comparing (\ref{eq:power-D}) and (\ref{eq:beta}), 
we can detect a level shift of size $h$ at time $[n\tau]$ with a CUSUM test of level $\alpha$ and power $\beta$ based on $n$ observations, if 
$h_C(n)\sim\frac{d_n}{n}\, c_C$,  where $c_C$ satisfies $\frac{c_C}{|a_1|}=\psi^{-}(\beta)$. Hence we obtain that 
$h_C(n)$ has to satisfy
\[
 h_C(n)=\frac{d_n}{n}|a_1| \psi^{-}(\beta) = n^{-D/2} L(n) |a_1|  \psi^{-}(\beta).
\]
Similarly, by comparing (\ref{eq:power-W}) and (\ref{eq:beta}), we get for the Wilcoxon
change-point test that $n$ has to satisfy
\[
  h_W(n)= n^{-D/2}L(n) \frac{|\int_\R J_1(x) dF(x)|}{\int_\R f^2(x) dx} \psi^{-}(\beta).
\]

In the following theorem, we compute the asymptotic relative efficiency of the Wilcoxon change point test 
with respect to the CUSUM test.

\begin{theorem} Let $(\xi_i)_{i\geq 1}$ be a stationary Gaussian process with mean zero, variance $1$ and
autocovariance function as in (\ref{eq:acf}). Moreover, let $G:\R\rightarrow \R$ be a measurable
function satisfying $E(G^2(\xi_1))<\infty$, and such that $G(\xi_1)$ has a distribution function $F(x)$ with
bounded density $f(x)$. Assume that the Hermite rank of $G(\xi_1)$ as well as the Hermite rank of the
class of functions $(1_{\{G(\xi_1)\leq x \}} -F(x))$, $x\in \R$ are equal to $1$.
Moreover assume that $0<D<1$. Then
\begin{equation}
  ARE(T_W, T_C)=\left(\frac{|a_1|\, \int_\R f^2(x) dx }{|\int_\R J_1(x) dF(x)|} \right)^{2/ D},
\label{eq:are-formula}
\end{equation}
where $T_C$ and $T_W$ denote the CUSUM test and the Wilcoxon change-point test, respectively.
\end{theorem}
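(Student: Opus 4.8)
The plan is to reduce the statement to an inversion problem and then to invoke the theory of regular variation, using the two detectable-shift formulas established in the preparation immediately preceding the theorem. Recall that for $m=1$ the normalization satisfies $d_n^2\sim\kappa_1 n^{2-D}L(n)$ by (\ref{eq:dn2}), so the function $g(n):=d_n/n$ is regularly varying of index $-D/2$ and tends to $0$. Moreover the preparatory computation expressed the level shifts detectable with asymptotic level $\alpha$ and power $\beta$ as
\[
h_C(n)=g(n)\,|a_1|\,\psi^{-}(\beta),\qquad
h_W(n)=g(n)\,\frac{\left|\int_\R J_1(x)\,dF(x)\right|}{\int_\R f^2(x)\,dx}\,\psi^{-}(\beta).
\]
Writing $K_C:=|a_1|\,\psi^{-}(\beta)$ and $K_W:=\frac{|\int_\R J_1\,dF|}{\int_\R f^2}\,\psi^{-}(\beta)$, we have $h_C(n)=K_C\,g(n)$ and $h_W(n)=K_W\,g(n)$.

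First I would translate the definition (\ref{eq:are}) of the ARE into an inversion. For a target shift of size $h$, the sample sizes $n_C$ and $n_W$ required by the two tests are, up to the integer rounding discussed below, the solutions of $h_C(n_C)=h$ and $h_W(n_W)=h$, that is $g(n_C)=h/K_C$ and $g(n_W)=h/K_W$. Since $g$ is regularly varying of negative index $-D/2$, it is asymptotically monotone and admits an asymptotic inverse $g^{\leftarrow}$ which is regularly varying of index $-2/D$ and satisfies $g(g^{\leftarrow}(y))\sim y$ as $y\to 0^{+}$. Hence $n_C\sim g^{\leftarrow}(h/K_C)$ and $n_W\sim g^{\leftarrow}(h/K_W)$.

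Next I would compute the limit of the ratio. Writing $g^{\leftarrow}(y)=y^{-2/D}\ell^{*}(1/y)$ with $\ell^{*}$ slowly varying at $\infty$, the two arguments $h/K_C$ and $h/K_W$ differ only by the fixed factor $K_W/K_C$, so
\[
\frac{n_C}{n_W}\sim
\frac{(h/K_C)^{-2/D}\,\ell^{*}(K_C/h)}{(h/K_W)^{-2/D}\,\ell^{*}(K_W/h)}
=\left(\frac{K_C}{K_W}\right)^{2/D}\frac{\ell^{*}(K_C/h)}{\ell^{*}(K_W/h)}.
\]
As $h\to 0$ both arguments of $\ell^{*}$ tend to $\infty$ while their ratio remains equal to the constant $K_C/K_W$, so by the defining property of slow variation the last factor tends to $1$. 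The quotient $K_C/K_W=|a_1|\int_\R f^2\,dx\,/\,|\int_\R J_1\,dF|$ is exactly the ratio (\ref{eq:h-ratio}) of detectable shift heights, which yields the claimed formula for $ARE(T_W,T_C)$. This computation also shows that the limit is independent of $\tau,\alpha,\beta$, since these enter only through the common factor $\psi^{-}(\beta)$, which cancels in $K_C/K_W$.

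The main obstacle is the rigorous handling of the slowly varying factor $L$: one must verify that the inversion of $g$ is legitimate and, crucially, that the slowly varying parts cancel rather than contribute a spurious factor, which is precisely where the regular-variation machinery (existence and index of the asymptotic inverse, together with the property $\ell^{*}(cx)/\ell^{*}(x)\to 1$) is needed in place of a naive power-law inversion. A secondary, routine point is that $n_C$ and $n_W$ are integers defined as the smallest sample sizes achieving power $\beta$; one should check that replacing them by the exact solutions of $h_C(n)=h$ and $h_W(n)=h$ does not affect the limit, which holds because $g$ varies regularly and rounding alters $n$ only by a negligible relative amount.
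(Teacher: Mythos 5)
Your proposal is correct, and it arrives at the formula by a route that differs in organization from the paper's, though both rest on the same two pillars: the ratio $c_C/c_W=|a_1|\int_\R f^2(x)\,dx\,/\,|\int_\R J_1(x)\,dF(x)|$ of drift constants needed for equal limiting power, and the slow variation of $L$ to eliminate the nuisance factor in $d_n/n$. You work ``forward'': you take the detectable-shift formulas $h_C(n)=K_C\,g(n)$, $h_W(n)=K_W\,g(n)$ with $g(n)=d_n/n$ regularly varying of index $-D/2$, invoke the existence of an asymptotic inverse $g^{\leftarrow}$ of index $-2/D$, and let the uniform convergence property of the slowly varying part $\ell^{*}$ cancel the two $L$-contributions in the ratio $n_C/n_W$. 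The paper instead ``guesses and verifies'': it sets $b=\bigl(|\int_\R J_1\,dF|/(|a_1|\int_\R f^2)\bigr)^{2/D}$, constructs the matched local alternatives $A_n^C$ (shift $c\,d_n/n$) and $A_n^W$ (shift $c\,(b/n)d_{n/b}$), checks via $L(n/b)/L(n)\to 1$ that $h_n^W\sim c\,(d_n/n)\,b^{D/2}$, and then observes that the CUSUM statistic under $A_n^C$ and the Wilcoxon statistic under $A_n^W$ converge to the \emph{identical} limit process $\sup_\lambda\{(B_H(\lambda)-\lambda B_H(1))+c\,\phi_\tau(\lambda)/|a_1|\}$, so that equal shifts are detected with equal power at sample sizes in the ratio $b$, giving $ARE=1/b$. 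What your approach buys is a cleaner conceptual reduction to standard regular-variation machinery (asymptotic inverses, Bingham--Goldie--Teugels style), at the cost of having to justify that machinery and the passage from ``smallest integer sample size achieving power $\beta$'' to the exact solution of $h_C(n)=h$; what the paper's approach buys is that it never inverts anything -- the only analytic fact needed is $L(n/b)/L(n)\to 1$ for the fixed constant $b$ -- and it makes visible that the two tests have literally the same limiting power function along the matched sequences. Your closing remarks on the rounding issue and on the independence of the limit from $\tau,\alpha,\beta$ (cancellation of $\psi^{-}(\beta)$ in $K_C/K_W$) are exactly the right points to flag, and the paper's own treatment of these is no more detailed than yours.
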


{\em Proof.}
For abbreviation, we define
\begin{equation}
  b=\left( \frac{|\int_\R J_1(x) dF(x)|}{|a_1| \int_\R f^2(x) dx} \right)^{2/D}.
\label{eq:bD}
\end{equation}
We will show that the Wilcoxon change-point test based on $b\,n$ observations has asymptotically the 
same power as the CUSUM test based on $n$ observations. We will consider the local alternative 
\begin{equation}
 A_n^C=A_{\tau,h_n^C} = A_{\tau,c\, \frac{d_n}{n}}(n)
\label{eq:C-local}
\end{equation}
for the CUSUM test, and the local alternative
\begin{equation}
  A_n^W=A_{\tau,h_n^W}=A_{\tau, c\, \frac{b}{n} d_{n/b}}(n)
\label{eq:w-local}
\end{equation}
for the Wilcoxon change-point test. Note that under $A_{bn}^W$ the level shift is the same as under $A_n^C$. Further observe that, by (\ref{eq:dn2}),
\begin{eqnarray}
h_n^W\sim c\, \frac{b}{n} d_{n/b} 
&=& c\, \frac{b}{n} \kappa_1^{1/2} (n/b)^{1-D/2} L^{1/2}(n/b) \nonumber \\
&=& c\, \frac{b}{n} \kappa_1^{1/2} n^{1-D/2} L^{1/2}(n) b^{D/2-1} \frac{L^{1/2}(n/b)}{L^{1/2}(n) } \nonumber\\
&=& c\, \frac{d_n}{n} b^{D/2}  \left(\frac{L(n/b)}{L(n)}\right)^{1/2}\nonumber \\
&\sim& c\, \frac{d_n}{n} b^{D/2},
\label{eq:hb}
\end{eqnarray}
where we have used the fact that $L(n)$ is a slowly varying function. 

For the CUSUM test, we can apply Corollary~\ref{cor:Dn} and  we obtain  under the local alternative $A_n^C$,
that
\[
  \frac{1}{|a_1|} D_n 
  \claw \sup_{0\leq \lambda \leq 1} 
  \{(B_H(\lambda) - \lambda B_H(1) ) +\frac{1}{|a_1|} c\phi_\tau(\lambda)\}.
\]

For the Wilcoxon change-point test, we apply Corollary~\ref{cor:wil-alt} with $c$ replaced by $c\, b^{D/2}$, in view of (\ref{eq:hb}). We thus obtain under the local alternative $A_n^W$,
\begin{eqnarray*}
  \frac{1}{|\int_\R J_1(x) dF(x)|} W_n &\claw& \sup_{0\leq \lambda \leq 1} 
  \left\{(B_H(\lambda)-\lambda B_H(1)) +\frac{ \int_\R f^2(x) dx}{|\int_\R J_1(x) dF(x)| } c\, b^{D/2}\phi_\tau(\lambda)\right\}\\
&=&  \sup_{0\leq \lambda \leq 1} 
  \left\{(B_H(\lambda)-\lambda B_H(1)) +\frac{1}{|a_1|}  c\, \phi_\tau(\lambda)\right\},
\end{eqnarray*}
by (\ref{eq:hb}).
Let $q_\alpha$ denote the upper $\alpha$-quantile of the distribution of $ \sup_{0\leq \lambda \leq 1} 
  \{B_H(\lambda)-\lambda B_H(1)\}$; then the test that rejects the null hypothesis when 
$\frac{1}{|a_1|}D_n \geq q_\alpha$ or when $\frac{1}{|\int_\R J_1(x) dF(x)|} W_n\geq q_\alpha$, respectively, have asymptotically level $\alpha$. The power of these tests at the alternatives $A_n^C$  and $A_n^W$, respectively, converges to 
\[
  P\left(\sup_{0\leq \lambda \leq 1} 
  \left\{(B_H(\lambda)-\lambda B_H(1)) +\frac{1}{|a_1|}  c\, \phi_\tau(\lambda)\right\} \geq q_\alpha \right).
\]
Note that this also holds for the power along any other sequence, such as $bn$.
Since the level shift at the alternative $A_n^C$ equals the level shift at the alternative $A_{bn}^W$, we have shown that the Wilcoxon change-point test requires $b\, n$ observations to yield the same performance as the CUSUM test with $n$ observations. Thus $ARE(T_W,T_C)=1/b$, proving the theorem.
 \hfill $\Box$

\section{ARE of the Wilcoxon Change-Point Test and the CUSUM Test for IID Data}

We have shown in Example~\ref{ex:are-gauss} 
that in the case of LRD data, the ARE of the Wilcoxon change-point test and the 
CUSUM test is 1 for Gaussian data. In this section, we will compare this surprising result with the case 
of i.i.d.\ Gaussian data. We will find that in this case, the ARE is $\frac{3}{\pi}$, i.e.\ the
Wilcoxon change-point test is less efficient than the CUSUM test. 

We consider i.i.d.\ observations $X_1, \ldots, X_n$ with $X_i \sim \mathcal{N}(0,1)$ and the $U$-statistic
$$
U_k = \sum_{i=1}^k \sum_{j=k+1}^n h(X_i, X_j).
$$
As kernel we will choose $h_C(x,y)=y-x$ and $h_W(x,y)=I_{\{x\leq y\}}-\frac{1}{2}$, in other words we consider
\begin{align*}
 U_{k}^{(C)}   &=   \sum_{i=1}^k \sum_{j=k+1}^n h_C(X_i, X_j)   =   \sum_{i=1}^k \sum_{j=k+1}^n (X_j-X_i), \\
 U_{k}^{(W)}   &=   \sum_{i=1}^k \sum_{j=k+1}^n h_W(X_i, X_j)   =   \sum_{i=1}^k \sum_{j=k+1}^n \left( I_{\{X_i \leq X_j\}} -\frac{1}{2} \right).
\end{align*}
Both kernels $h_C, h_W$ are antisymmetric, i.e.\ they satisfy $h(x,y)=-h(y,x)$, so in order to determine the limit behaviour of $U_{k}^{(C)}$ and $U_{k}^{(W)}$, we can apply the limit theorems of Cs\"org\H{o} and Horv\'ath (1988).

\begin{theorem}
\label{THM:LimitTheorem_UC_UW_under_null}
Let $X_1, \ldots, X_n$ be i.i.d.\ random variables with $X_i \sim \mathcal{N}(0,1)$. Under the null hypothesis of no change in the mean, one has
\begin{align}
 \label{EQ:LimitTheorem_UC_under_null}
 \sup_{0\leq \lambda \leq 1} \left| \frac{1}{n^{3/2}} U^{(C)}_{[\lambda n]} - \BB_{1,n}(\lambda) \right| = o_P(1)
\end{align}
and
\begin{align}
 \label{EQ:LimitTheorem_UW_under_null}
 \sup_{0\leq \lambda \leq 1} \left| \frac{1}{n^{3/2} \sqrt{\frac{1}{12}}} U^{(W)}_{[\lambda n]} - \BB_{2,n}(\lambda) \right| = o_P(1),
\end{align}
where $(\BB_{i,n}(\lambda))_{0\leq\lambda\leq 1}$, $i=1,2$, is a sequence of Brownian bridges with mean $E[\BB_{i,n}(\lambda)]=0$ and auto-covariance $E[\BB_{i,n}(s) \BB_{i,n}(t)]=\min(s,t)-st$.
\end{theorem}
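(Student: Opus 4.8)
The plan is to recognize that both $U^{(C)}_{[\lambda n]}$ and $U^{(W)}_{[\lambda n]}$ are antisymmetric two-sample $U$-statistic processes of the classical CUSUM form $\sum_{i=1}^k\sum_{j=k+1}^n h(X_i,X_j)$, and therefore to reduce both statements to the functional limit theorem of Cs\"org\H{o} and Horv\'ath (1988) for such change-point $U$-statistics. The central device is the Hoeffding decomposition of the kernel. Writing $h(x,y)=\theta+h_1(x)+h_2(y)+\psi(x,y)$, antisymmetry forces $\theta=0$, $h_2=-h_1$, and $\psi$ antisymmetric with vanishing conditional expectations. Under the null hypothesis the $X_i$ are i.i.d., so the degenerate part $\psi$ contributes only a lower-order term, and the leading behaviour of $U_{[\lambda n]}$ is governed by the linear (projection) part.

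First I would carry out the Hoeffding projection explicitly for each kernel. For the CUSUM kernel $h_C(x,y)=y-x$ one gets $h_{1,C}(x)=E(h_C(x,X_2))=-x$ (since $E X_2=0$), so the projection is already exact and $U^{(C)}_{[\lambda n]}=\sum_{j=k+1}^n k X_j-\sum_{i=1}^k(n-k)X_i$ is a linear statistic in the $X_i$. For the Wilcoxon kernel $h_W(x,y)=1_{\{x\le y\}}-\tfrac12$ one computes $h_{1,W}(x)=E(1_{\{x\le X_2\}})-\tfrac12=\tfrac12-F(x)$ where $F$ is the standard normal CDF. The variance of this projection determines the normalization: for the Wilcoxon case $\mathrm{Var}(h_{1,W}(X_1))=\mathrm{Var}(F(X_1))=\mathrm{Var}(U)$ where $U=F(X_1)$ is uniform on $[0,1]$, giving $\tfrac{1}{12}$; this is exactly the factor $\sqrt{1/12}$ appearing in (\ref{EQ:LimitTheorem_UW_under_null}). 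For the CUSUM case $\mathrm{Var}(h_{1,C}(X_1))=\mathrm{Var}(-X_1)=1$, which is why (\ref{EQ:LimitTheorem_UC_under_null}) carries no such factor.

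Next I would invoke the Cs\"org\H{o}--Horv\'ath approximation to replace the $U$-statistic process by its projection process uniformly in $\lambda$. After centering and the appropriate $n^{-3/2}$ scaling, the linear part $\frac{1}{n^{3/2}}\big(k\sum_{j=k+1}^n h_1(X_j)-(n-k)\sum_{i=1}^k h_1(X_i)\big)$ is, up to $o_P(1)$, a rescaled tied-down partial-sum process. Setting $S_k=\sum_{i=1}^k h_1(X_i)$ and using $k=[\lambda n]$, this quantity equals $\frac{1}{n^{3/2}}\big(S_n\,[\lambda n]-n\,S_{[\lambda n]}\big)$ up to negligible boundary terms, which after dividing by the standard deviation $\sigma=\sqrt{\mathrm{Var}(h_1(X_1))}$ is precisely the Brownian-bridge functional $\frac{1}{\sigma}\big(\lambda\,n^{-1/2}S_n-n^{-1/2}S_{[\lambda n]}\big)$. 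By Donsker's theorem $n^{-1/2}S_{[\lambda n]}/\sigma\Rightarrow W(\lambda)$, a standard Brownian motion, so the displayed functional converges to $\lambda W(1)-W(\lambda)$, whose distribution is that of the Brownian bridge $\BB(\lambda)$ with covariance $\min(s,t)-st$. Applying the Skorohod/Dudley--Wichura representation lets me upgrade this to the almost-sure uniform approximation by a sequence of Brownian bridges $\BB_{i,n}$, yielding the $o_P(1)$ statements.

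\emph{The main obstacle} is controlling the degenerate kernel $\psi$ uniformly over all split points $\lambda\in[0,1]$, rather than at a single $k$. Pointwise the $U$-statistic CLT guarantees the quadratic part is $O_P(1)$ and hence $o_P(n^{3/2})$ after normalization, but the supremum over $\lambda$ requires a maximal inequality or tightness argument for the degenerate two-sample $U$-process. This is exactly the content supplied by the Cs\"org\H{o}--Horv\'ath theorem, so the real work is verifying that the standard Gaussian kernels $h_C,h_W$ satisfy their moment and regularity hypotheses — for $h_C$ this needs $E X_1^2<\infty$, which is immediate, and for the bounded kernel $h_W$ the degenerate part is uniformly bounded, so the requisite conditions hold trivially. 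With that in hand, the two convergences follow from the single general theorem by reading off the two projection variances $1$ and $\tfrac{1}{12}$.
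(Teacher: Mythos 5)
Your proposal is correct and follows essentially the same route as the paper: both reduce the claim to Theorem~4.1 of Cs\"org\H{o} and Horv\'ath (1988) for antisymmetric kernels with $E[h^2(X_1,X_2)]<\infty$, and identify the normalizing variances $\sigma^2=E[\tilde h^2(X_1)]$ as $1$ for $h_C$ and $\mathrm{Var}(F(X_1))=\tfrac{1}{12}$ for $h_W$. The paper's proof is in fact terser than yours, simply citing that theorem and checking square-integrability, while you additionally spell out the Hoeffding projection and the bridge structure of the linear part; the substance is the same.
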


\begin{proof}
 By Theorem 4.1 of Cs\"org\H{o} and Horv\'ath (1988), it holds under the null hypothesis $H$ that
\begin{align*}
\sup_{0\leq \lambda \leq 1} \left| \frac{1}{n^{3/2} \sigma} U_{[\lambda n]} - \BB_n(\lambda) \right| = o_P(1)
\end{align*}
where $(\BB_{n}(\lambda))_{0\leq\lambda\leq 1}$  is a sequence of Brownian bridges like $\BB_{1,n}$ and $\BB_{2,n}$ above and where $\sigma^2 = E[\tilde{h}^2(X_1)]$ with $\tilde{h}(t) = E[h(t, X_1)]$. The kernel $h$ has to fulfill $E[h^2(X_1, X_2)] < \infty$ which is the case for $h_C(x,y)=y-x$ and $h_W(x,y)=I_{\{x\leq y\}}-\frac{1}{2}$ and Gaussian $X_i$.
\end{proof}

\begin{theorem}
\label{THM:LimitTheorem_UC_UW_under_alt}
Let $X_1, \ldots, X_n$ be i.i.d.\ random variables with $X_i \sim \mathcal{N}(0,1)$. Under the sequence of alternatives $A_{\tau, h_n}(n)$ and with $h_n=\frac{1}{\sqrt{n}} c$, where $c$ is a constant, one has
\begin{align}
 \label{EQ:LimitTheorem_UC_under_alt}
 \left( \frac{1}{n^{3/2}} U^{(C)}_{[\lambda n]} \right)_{0\leq \lambda\leq 1}   \to   \left( \BB_1(\lambda) + c\phi_\tau(\lambda) \right)_{0\leq \lambda\leq 1}
\end{align}
and
\begin{align}
 \label{EQ:LimitTheorem_UW_under_alt}
 \left( \frac{1}{n^{3/2} \sqrt{\frac{1}{12}}} U^{(W)}_{[\lambda n]} \right)_{0\leq \lambda\leq 1}   \to   \left( \BB_2(\lambda)  + \frac{c}{2 \sqrt{\pi} \cdot \sqrt{\frac{1}{12}}} \phi_\tau(\lambda) \right)_{0\leq \lambda\leq 1}
\end{align}
in distribution, where $(\BB_i(\lambda))_{0\leq\lambda\leq 1}$ is a Brownian bridge, $i=1,2$. 
\end{theorem}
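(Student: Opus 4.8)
The plan is to mirror the structure of the long-range dependent case (Theorems~\ref{th:dif-alt} and~\ref{th:wil-alt}) but now using the i.i.d.\ limit theorem from Theorem~\ref{THM:LimitTheorem_UC_UW_under_null} as the workhorse under the null. The fundamental observation is that both $U^{(C)}_k$ and $U^{(W)}_k$ decompose, under the alternative $A_{\tau,h_n}(n)$, into a ``noise'' part whose distribution coincides with its distribution under the null, plus a deterministic ``signal'' part coming from the level shift $h_n$. First I would write $X_i = \mu_i + \epsilon_i$ with $\epsilon_i\sim\mathcal{N}(0,1)$ i.i.d., and treat the two kernels separately.

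For the CUSUM kernel $h_C(x,y)=y-x$ the decomposition is exact and linear: since $\sum_{i=1}^k\sum_{j=k+1}^n(X_j-X_i)=\sum_{i=1}^k\sum_{j=k+1}^n(\epsilon_j-\epsilon_i)+\sum_{i=1}^k\sum_{j=k+1}^n(\mu_j-\mu_i)$, the noise part has exactly the null distribution and hence, by \eqref{EQ:LimitTheorem_UC_under_null}, $n^{-3/2}$ times it converges to $\BB_1(\lambda)$. The deterministic part is precisely the quantity computed in \eqref{eq:exp-alt}; with $h_n=c/\sqrt{n}$ (the short-range scaling $d_n\sim\sqrt{n}$, so that $n h_n/\sqrt n\to c$ replaces the LRD rate $d_n/n$) it converges uniformly to $c\,\phi_\tau(\lambda)$ by the same elementary estimate as in the proof of Theorem~\ref{th:dif-alt}. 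Combining the two via Slutsky / the continuous mapping theorem on $D[0,1]$ gives \eqref{EQ:LimitTheorem_UC_under_alt}.

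For the Wilcoxon kernel the decomposition is not linear, so I would follow the indicator-splitting identity already carried out in the proof of Theorem~\ref{th:wil-alt}: for $\lambda\le\tau$ and for $\lambda\ge\tau$ one writes $1_{\{X_i\le X_j\}}$ as $1_{\{G(\xi_i)\le G(\xi_j)\}}$ plus a correction term of the form $1_{\{G(\xi_j)<G(\xi_i)\le G(\xi_j)+h_n\}}$, here with $G=\mathrm{id}$ and $\xi_i=X_i$. The leading term has the null distribution, so after normalizing by $n^{3/2}\sqrt{1/12}$ it converges to $\BB_2(\lambda)$ by \eqref{EQ:LimitTheorem_UW_under_null}. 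The expectation of the correction term is $n^2\phi_\tau(\lambda)\int_\R(F(x+h_n)-F(x))\,dF(x)$, which for $h_n=c/\sqrt n$ and standard normal $F$ satisfies $n^2\phi_\tau(\lambda)\cdot h_n\int_\R f^2(x)\,dx+o(\cdot)$; dividing by $n^{3/2}\sqrt{1/12}$ and using $\int_\R f^2(x)\,dx=1/(2\sqrt\pi)$ yields the deterministic drift $\frac{c}{2\sqrt\pi\,\sqrt{1/12}}\,\phi_\tau(\lambda)$ appearing in \eqref{EQ:LimitTheorem_UW_under_alt}.

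The main obstacle is showing that the \emph{fluctuation} of the Wilcoxon correction term around its expectation is negligible, uniformly in $\lambda$, i.e.\ that $\frac{1}{n^{3/2}}\sup_\lambda\big|\sum_{i,j}(1_{\{G(\xi_j)<G(\xi_i)\le G(\xi_j)+h_n\}}-E[\cdots])\big|=o_P(1)$. In the LRD setting this was handled through the empirical-process non-central limit theorem of Dehling and Taqqu together with the $J(x)$ Hermite coefficients; in the i.i.d.\ setting the analogous tool is the classical Donsker-type empirical process limit theorem, which gives that $\sqrt n(F_{[n\lambda]}(x)-F(x))$ converges to a Kiefer-type Gaussian process. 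Writing the correction term as a double-indexed empirical integral and applying this weak convergence (together with the shrinking-neighbourhood control $h_n\to0$, the boundedness of $f$, and dominated convergence) shows that the centered term contributes only at order $n\sqrt n\cdot h_n = O(n)$, which is $o(n^{3/2})$, so it vanishes after normalization. I would verify this uniform negligibility along the same two-sided split ($\lambda\le\tau$ and $\lambda\ge\tau$) used in Theorem~\ref{th:wil-alt}, and then conclude \eqref{EQ:LimitTheorem_UW_under_alt} by combining the stochastic leading term and the deterministic drift.
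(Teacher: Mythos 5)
Your treatment of the CUSUM statistic and your identification of the Wilcoxon drift coincide with the paper's proof: the exact linear noise-plus-signal decomposition, the indicator-splitting identity producing the correction term $\sum_{i}\sum_{j}1_{\{\epsilon_j<\epsilon_i\le\epsilon_j+h_n\}}$, and the computation $\vartheta_n=P(\epsilon_j<\epsilon_i\le\epsilon_j+h_n)\sim h_n\int_\R f^2(x)\,dx$ are all there. Where you genuinely diverge is in controlling the fluctuation of the Wilcoxon correction term about its mean. The paper does this by a Hoeffding decomposition of the kernel $1_{\{y<x\le y+h_n\}}$ into $\vartheta_n+h_{1,n}(x)+h_{2,n}(y)+h_{3,n}(x,y)$, handling the two linear parts with Kolmogorov's maximal inequality and the Taylor-expansion bound $\Var(h_{1,n}(\epsilon))=O(h_n^2)$ (which is why the paper assumes two bounded derivatives of $F$), and the degenerate part by a second-moment argument exploiting the orthogonality of $h_{3,n}$, with $\Var(h_{3,n}(\epsilon,\eta))=O(h_n)$. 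You instead transplant the LRD argument of Theorem~\ref{th:wil-alt}: rewrite the double sum as $[n\lambda](n-[n\tau])\int_\R(F_{[n\lambda]}(x+h_n)-F_{[n\lambda]}(x))\,dF_{[n\tau]+1,n}(x)$ and invoke the sequential (Kiefer-type) empirical Donsker theorem plus a Skorokhod representation, so that the centered part is killed by uniform continuity of the limit process over windows of width $F(x+h_n)-F(x)\le\|f\|_\infty h_n\to 0$. This is a legitimate alternative; it parallels the LRD proof more closely and needs only a bounded density rather than a bounded second derivative, at the price of invoking the functional empirical CLT where the paper gets by with elementary maximal and variance inequalities. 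Two cautions: your stated order ``$n\sqrt n\cdot h_n=O(n)$'' is a heuristic, not what the Donsker route actually delivers --- uniform continuity gives only $o(n^{3/2})$ with no rate, and a direct variance computation gives $O(n^{3/2}\sqrt{h_n})$; either suffices, but $O(n)$ is not established. And you must still dispose of the second half of the splitting, $\int_\R(F(x+h_n)-F(x))\,d(F_{[n\tau]+1,n}-F)(x)$, separately (a one-line variance bound of order $\sqrt n\,h_n$ does it), since the sup-norm control of the sequential empirical process in the $i$-variable only handles the first half.
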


\begin{proof}
First, we prove \eqref{EQ:LimitTheorem_UC_under_alt}. Like for the case of LRD observations, we decompose the statistic, so that we obtain under the sequence of alternatives $A_{\tau, h_n}(n)$
$$
\frac{1}{n^{3/2}} U^{(C)}_{[\lambda n]}   =   \frac{1}{n^{3/2}} \sum_{i=1}^{[\lambda n]} \sum_{j=[\lambda n]+1}^n (\epsilon_j-\epsilon_i)  +  \frac{1}{n^{3/2}} \sum_{i=1}^{[\lambda n]} \sum_{j=[\lambda n]+1}^n (\mu_j-\mu_i).
$$
By Theorem~\ref{THM:LimitTheorem_UC_UW_under_null}, the first term on the right-hand side converges to a Brownian bridge $\BB(\lambda)$. For the second term we have like in the proof for LRD observations
$$
\frac{1}{n^{3/2}} \sum_{i=1}^{[\lambda n]} \sum_{j=[\lambda n]+1}^n (\mu_j-\mu_i)   \sim   \sqrt{n} h_n \phi_\tau(\lambda),
$$
and in order for the right-hand side to converge, we have to choose 
\begin{equation}
h_n=\frac{1}{\sqrt{n}} c.
\label{eq:hn-iid}
\end{equation}
Now let us prove \eqref{EQ:LimitTheorem_UW_under_alt}. Again like for LRD observations, we decompose the statistic into one term that converges like under the null hypothesis and one term which converges to a constant. Under the sequence of alternatives $A_{\tau, h_n}(n)$ and for the case $\lambda \leq \tau$, this decomposition is
\begin{equation}
\label{EQ:Decomposition_UW_lambda<tau}
\frac{1}{n^{3/2}} U^{(W)}_{[\lambda n]}   =  \frac{1}{n^{3/2}} \sum_{i=1}^{[\lambda n]} \sum_{j=[\lambda n]+1}^n \left( I_{\{\epsilon_i \leq \epsilon_j\}} -\frac{1}{2} \right) + \frac{1}{n^{3/2}} \sum_{i=1}^{[\lambda n]} \sum_{j=[\tau n]+1}^n I_{\{\epsilon_j < \epsilon_i \leq \epsilon_j + h_n\}}. 
\end{equation}
The first term converges uniformly to a Brownian Bridge, like under the null hypothesis. We will show that, if the observations $\epsilon_i = G(\xi_i)$ are i.i.d.\ with c.d.f. $F$ which has two bounded derivatives $F'=f$ and $F''$, the second term converges uniformly to $c \lambda (1-\tau) \int_\R f^2(x)\, dx$, which is $c\phi_\tau(\lambda) \int_\R f^2(x)\, dx$ for the case $\lambda \leq \tau$. In the case of standard normally distributed $G(\xi_i)$, i.e.\ for $F=\Phi$ and $f=\varphi$, this function is $c (2\sqrt{\pi})^{-1} \phi_\tau(\lambda)$. To this end, we consider the following sequence of Hoeffding decompositions for the sequence of kernels $h_n(x,y) = I_{\{y < x \leq y + h_n\}}$:
\begin{equation}
\label{EQ:Sequence_Hoeffding_decompositions}
h_n(x,y) = \vartheta_n + h_{1,n}(x) + h_{2,n}(y) + h_{3,n}(x,y)
\end{equation}
Let $X,Y\sim F$ be i.i.d.~random variables. Then we define
\begin{eqnarray}
\vartheta_n   :=   E[h_n(X,Y)] 
   &=&   P(Y\leq X \leq Y+h_n)\nonumber \\   
   &=&   \int_\R \left( \int_y^{y+h_n} f(x) \; dx \right) f(y) \; dy \nonumber \\
   &=&  \int_\R \left( F(y+h_n)-F(y) \right) f(y) \; dy \nonumber\\
   &=&   h_n \int_\R \frac{F(y+h_n)-F(y)}{h_n} f(y) \; dy \nonumber \\
   &\sim&   h_n \int_\R f^2(y) \; dy,
\label{eq:limtheta}
\end{eqnarray}
where in the last step we have used that $(F(y+h_n)-F(y))/h_n\to f(y)$ and the dominated convergence theorem. Moreover, we set
\begin{eqnarray}
h_{1,n}(x)   :=   E[h_n(x, Y)] - \vartheta_n 
   &=&   E[I_{\{Y < x \leq Y + h_n\}}]-\vartheta_n \nonumber \\
   &=&   F(x) - F(x-h_n) - \vartheta_n
\label{eq:limh1}
\end{eqnarray}
and
\begin{eqnarray*}
h_{2,n}(y)   :=   E[h_n(X, y)] - \vartheta_n 
   &=&   E[I_{\{y < X \leq y+h_n\}}] - \vartheta_n\\
   &=&   F(y+h_n) - F(y) - \vartheta_n
\end{eqnarray*}
and 
\begin{eqnarray*}
h_{3,n}(x,y)   &:=&   h_n(x,y) - h_{1,n}(x) - h_{2,n}(y) - \vartheta_n \\
   &=&   I_{\{y < x \leq y + h_n\}} - F(x) + F(x-h_n) + \vartheta_n - F(y+h_n) + F(y).
\end{eqnarray*}
We will now show that
\begin{equation}
 \label{EQ:Hoeffding_h1_h2_h3_to_0}
 \sup_{0 \leq \lambda \leq \tau} \frac{1}{n^{3/2}} \left | \sum_{i=1}^{[\lambda n]} \sum_{j=[\tau n]+1}^n \left( h_{1,n}(\epsilon_i) + h_{2,n}(\epsilon_j) + h_{3,n}(\epsilon_i, \epsilon_j) \right) \right| \to 0
\end{equation}
in probability, and from this it follows by the sequence of Hoeffding decompositions~\eqref{EQ:Sequence_Hoeffding_decompositions} that
\begin{equation*}
 \sup_{0 \leq \lambda \leq \tau} \frac{1}{n^{3/2}} \left | \sum_{i=1}^{[\lambda n]} \sum_{j=[\tau n]+1}^n \left( h_n(\epsilon_i, \epsilon_j) - \vartheta_n \right) \right| \to 0
\end{equation*}
i.e.\ that the second term in \eqref{EQ:Decomposition_UW_lambda<tau} converges uniformly to 
\begin{align*}
\lim_{n\rightarrow \infty} \frac{1}{n^{3/2}} \sum_{i=1}^{[\lambda n]} \sum_{j=[\tau n]+1}^n \theta_n=
\lim_{n\to\infty} \frac{1}{n^{3/2}} [\lambda n] (n-[\tau n])  \vartheta_n   &=   \lambda (1-\tau) c \int_\R f^2(x) \; dx
\end{align*}
by (\ref{eq:limtheta}) and (\ref{eq:hn-iid}).

We use the triangle inequality and show the uniform convergence to 0 for each of the three terms in \eqref{EQ:Hoeffding_h1_h2_h3_to_0} seperately. Since the parameter $\lambda$ occurs only in the floor function value $[\lambda n]$, the supremum is in fact a maximum, and the $h_{1,n}(\epsilon_i)$ are i.i.d.\ random variables, so we can use Kolmogorov's inequality. We obtain for the first term in \eqref{EQ:Hoeffding_h1_h2_h3_to_0}
\begin{equation} 
 \label{EQ:KolmogorovInequality_for_h1}
 P\left( \sup_{0 \leq \lambda \leq \tau} \frac{n-[\tau n]}{n^{3/2}} \left|\sum_{i=1}^{[\lambda n]} h_{1,n}(\epsilon_i) \right| > s \right)   \leq   \frac{1}{s^2} \frac{n^2 (1-\tau)^2}{n^3} \sum_{i=1}^{[\tau n]} \Var[h_{1,n}(\epsilon_i)].
\end{equation}
Now consider an independent copy $\epsilon$ of the $\epsilon_i$ and the Taylor expansion of $F$ around the value of $\epsilon$, 
$$
F(\epsilon+h_n)=F(\epsilon)+F'(\epsilon) h_n+ \frac{F''(\epsilon+\delta)}{2} h_n^2,
$$
where the last term is the Lagrange remainder and thus $\epsilon+\delta$ is between $\epsilon$ and 
$\epsilon+h_n$. Then we obtain
\begin{align*}
 \frac{1}{h_n^2} \Var[h_{1,n}(\epsilon)]   &=   \Var\left[ \frac{F(\epsilon) - F(\epsilon - h_n)}{h_n} \right] \\
   &=   \Var\left[ f(\epsilon) + F''(\epsilon+\delta) \frac{h_n}{2} \right] \\
   &=  \Var\left[ f(\epsilon)\right] + \Var\left[F''(\epsilon+\delta) \frac{h_n}{2} \right] \\
   &\qquad + 2 \left(E[ f(\epsilon) F''(\epsilon+\delta) h_n ] - E[f(\epsilon)] \, E[F''(\epsilon+\delta) h_n] \right),
\end{align*}
and since $f=F'$ and $F''$ are bounded by assumption, we get $\Var[h_{1,n}(\epsilon)] = O(h_n^2)$.
Since $h_n\rightarrow 0$,  the right-hand side of \eqref{EQ:KolmogorovInequality_for_h1} converges to 0 as $n$ increases. 

In the same manner, we obtain
\begin{equation} 
 \label{EQ:KolmogorovInequality_for_h2}
 P\left( \sup_{0 \leq \lambda \leq \tau} \frac{[\lambda n]}{n^{3/2}} \left|\sum_{j=[\tau n]+1}^{n} h_{2,n}(\epsilon_j) \right| > s \right)   \leq   \frac{1}{s^2} \frac{n^2 \lambda^2}{n^3} \sum_{j=1}^{n} \Var[h_{2,n}(\epsilon_j)] \to 0.
\end{equation}

Finally, we have to show that 
\begin{equation} 
 \label{EQ:Hoeffding_h3_to_0}
  \sup_{0 \leq \lambda \leq \tau} \frac{1}{n^{3/2}} \left | \sum_{i=1}^{[\lambda n]} \sum_{j=[\tau n]+1}^n h_{3,n}(\epsilon_i, \epsilon_j) \right| \to 0
\end{equation}
in probability. We set temporarily $l:=[\lambda n] $ and $T:=[\tau n]$ and obtain from Markov's inequality
\begin{align*}
 P\left( \max_{0 \leq l \leq T} \frac{1}{n^{3/2}} \left | \sum_{i=1}^{l} \sum_{j=T+1}^n h_{3,n}(\epsilon_i, \epsilon_j) \right| > s \right)   &\leq   \frac{1}{s^2} E\left[ \max_{0 \leq l \leq T} \frac{1}{n^{3/2}} \sum_{i=1}^{l} \sum_{j=T+1}^n h_{3,n}(\epsilon_i, \epsilon_j) \right]^2.
\end{align*}
Now for any collection of random variables $Y_1, \ldots, Y_k$, one has $E[\max \{Y_1^2, \ldots Y_k^2\}] \leq \sum_{i=1}^k EY_i^2$, so that
\begin{align*}
 \frac{1}{s^2} E\left[ \max_{0 \leq l \leq T} \frac{1}{n^{3/2}} \sum_{i=1}^{l} \sum_{j=T+1}^n 
h_{3,n}(\epsilon_i, \epsilon_j) \right]^2   &\leq   \frac{1}{s^2} \frac{1}{n^3} \sum_{l=1}^T E \left[ \sum_{i=1}^{l} \sum_{j=T+1}^n h_{3,n}(\epsilon_i, \epsilon_j) \right]^2 \\
    &=   \frac{1}{s^2} \frac{1}{n^3} \sum_{l=1}^T \sum_{i=1}^{l} \sum_{j=T+1}^n \Var\left[h_{3,n}(\epsilon_i, \epsilon_j) \right],
\end{align*}
where in the last step we have used that $h_{3,n}(\epsilon_i,\epsilon_j)$ are uncorrelated by Hoeffding's decomposition. Now for two i.i.d.\ random variables $\epsilon$, $\eta$, we have, like above with the Taylor expansion of $F$:
\begin{align*}
 \Var\left[h_{3,n}(\epsilon, \eta) \right]   &=   \Var\left[I_{\{ \eta < \epsilon \leq \eta+h_n \}} -F(\epsilon) + F(\epsilon-h_n) + \vartheta_n - F(\eta+h_n) + F(\eta) \right]  \\
     &=   \Var\left[I_{\{ \eta < \epsilon \leq \eta+h_n \}} - h_n \left(f(\epsilon) + O_P(h_n)\right) + h_n \left(f(\eta) + O_P(h_n)\right) \right]  \\
     &=   \Var\left[I_{\{ \eta < \epsilon \leq \eta+h_n \}}\right] + \Var\left[ h_n \left(f(\epsilon) + f(\eta) + O_P(h_n)\right)\right] \\
     &\qquad + 2\Cov\left[I_{\{ \eta < \epsilon \leq \eta+h_n \}}, h_n \left(f(\epsilon) + f(\eta) + O_P(h_n)\right)\right]  \\
     &\leq   (\vartheta_n - \vartheta_n^2) + h_n^2 O(1) + 2 \sqrt{(\vartheta_n - \vartheta_n^2) \cdot h_n^2 O(1)}  \\
     &=   O(h_n),
\end{align*}
using (\ref{eq:limtheta}).
We have just shown that
$$
 P\left( \max_{0 \leq l \leq T} \frac{1}{n^{3/2}} \left | \sum_{i=1}^{l} \sum_{j=T+1}^n h_{3,n}(\epsilon_i, \epsilon_j) \right| > s \right)   \leq   \frac{1}{s^2} O(h_n),
$$
which proves \eqref{EQ:Hoeffding_h3_to_0}. So we have proven \eqref{EQ:Decomposition_UW_lambda<tau} for the case $\lambda \leq \tau$. The proof for $\lambda > \tau$ is similar.
\end{proof}

Now the stage is set to calculate the ARE of the Wilcoxon test based on $U^{(W)}_{[\lambda n]}$ and the CUSUM test based on $U^{(C)}_{[\lambda n]}$, as defined in the section about the ARE in the LRD case. Let $q_\alpha$ denote the upper $\alpha$-quantile of the distribution of $\sup_{0\leq \lambda \leq 1} \BB(\lambda)$. By Theorem~\ref{THM:LimitTheorem_UC_UW_under_alt}, the power of the tests is given
respectively by
\begin{equation}
 \label{EQ:Power_CUSUM_CPtest_iid}
 P\left( \sup_{0\leq \lambda \leq 1} \left(\BB(\lambda) + c_C \phi_\tau(\lambda) \right) \geq q_\alpha \right)
\end{equation}
and
\begin{equation}
 \label{EQ:Power_Wilcoxon_CPtest_iid}
 P\left( \sup_{0\leq \lambda \leq 1} \left(\BB(\lambda) + c_W \frac{1}{\sigma \cdot 2\sqrt{\pi}} \phi_\tau(\lambda) \right) \geq q_\alpha \right)
\end{equation}
where $\sigma^2 = 1/12$ and we assume that 
$$
h_W(n)   =   \frac{c_W}{\sqrt{n}}, \qquad h_C(n)   =   \frac{c_C}{\sqrt{n}}.
$$
Thus if we want both tests to have identical power, we must ensure that $c_C = c_W/(\sigma \cdot 2\sqrt{\pi})$, in other words
$$
\frac{h_C(n)}{h_W(n)}   =   \frac{c_C}{c_W}   =   \frac{1}{\sigma \cdot 2\sqrt{\pi}}.
$$
Now we define, as in  the proof for LRD observations, the probability
$$
  \psi(t):= P\left( \sup_{0\leq \lambda\leq 1} \left(\BB(\lambda) + t \, \phi_\tau(\lambda)\right) 
 \geq q_\alpha \right),
$$
for whose generalized inverse $\psi^-$ holds
\begin{equation}
\label{EQ:beta_iid}
 P\left( \sup_{0\leq \lambda\leq 1} \left(\BB(\lambda) +\psi^{-}(\beta) \, \phi_\tau(\lambda)\right) 
 \geq q_\alpha \right) \geq \beta.
\end{equation}
Now, comparing \eqref{EQ:beta_iid} and \eqref{EQ:Power_CUSUM_CPtest_iid}, we conclude that we can detect a level shift of size $h$ at time $[n\tau]$ with the CUSUM test of (asymptotic) level $\alpha$ and power $\beta$ based on $n$ observations, if $h_C(n)=\frac{c_C}{\sqrt{n}}$ and where $c_C$ satisfies $c_C =\psi^{-}(\beta)$; hence we obtain that 
$h_C(n)$ has to satisfy
$$
 h_C(n)  =   \frac{1}{\sqrt{n}} \psi^{-}(\beta).
$$
In the same manner, we get for the Wilcoxon test the conditions $h_W(n)=\frac{c_W}{\sqrt{n}}$ and $c_W/(\sigma 2 \sqrt{\pi}) = \psi^{-}(\beta)$ and thus
$$
 h_W(n)  =   \frac{\sigma 2 \sqrt{\pi}}{\sqrt{n}} \psi^{-}(\beta).
$$
Solving these two equations for $n$ again and denoting the resulting numbers of observations by $n_C$ and $n_W$, respectively, we obtain
\begin{align*}
 n_C   &=   \left( \frac{1}{h_C} \psi^{-}(\beta) \right)^2 \\
 n_W   &=   \left( \frac{2\sigma\sqrt{\pi}}{h_W} \psi^{-}(\beta) \right)^2.
\end{align*}
To obtain $ARE(T_W,T_C)$, we equate $h_W$ and $h_C$. We then obtain the following theorem. 

\begin{theorem}
\label{THM:ARE_TC_TW_iid_Gaussian}
Let $X_1, \ldots, X_n$ be i.i.d.\ random variables with $X_i \sim \mathcal{N}(0,1)$. Then 
\begin{align}
 ARE(T_W,T_C) = \lim_{h\to 0} \frac{n_C}{n_W} = (2\sigma\sqrt{\pi})^{-2}  =  \frac{3}{\pi},
\end{align}
where $T_C$, $T_W$ denote the one-sided CUSUM-test, respectively the one-sided Wilcoxon test, for the test problem $(H, A_{\tau, h_n})$. 
\end{theorem}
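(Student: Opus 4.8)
The plan is to treat this statement not as a fresh asymptotic analysis but as the concluding arithmetic built on the limit theory already in place. All of the probabilistic content—the functional limit theorems for $U^{(C)}_{[\lambda n]}$ and $U^{(W)}_{[\lambda n]}$ under the local alternatives $A_{\tau,h_n}$ with $h_n=c/\sqrt n$—has been established in Theorem~\ref{THM:LimitTheorem_UC_UW_under_alt}, and the resulting asymptotic power expressions \eqref{EQ:Power_CUSUM_CPtest_iid} and \eqref{EQ:Power_Wilcoxon_CPtest_iid} are already recorded. What remains is to convert these into sample-size requirements and to compare them, exactly mirroring the argument carried out in the LRD case.

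First I would fix the level $\alpha$, the power $\beta$ and the change-point location $\tau$, and introduce the power function $\psi(t)=P(\sup_{0\le\lambda\le1}(\BB(\lambda)+t\,\phi_\tau(\lambda))\ge q_\alpha)$ together with its generalized inverse $\psi^{-}$, as in \eqref{EQ:beta_iid}. Comparing \eqref{EQ:beta_iid} with the two power formulas identifies the coefficients of $\phi_\tau(\lambda)$ needed to attain power $\beta$: for the CUSUM test this forces $c_C=\psi^{-}(\beta)$, and for the Wilcoxon test $c_W/(\sigma\cdot 2\sqrt\pi)=\psi^{-}(\beta)$, where $\sigma^2=1/12$. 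Recalling $h_C(n)=c_C/\sqrt n$ and $h_W(n)=c_W/\sqrt n$, this yields the explicit detectable shift sizes $h_C(n)=\psi^{-}(\beta)/\sqrt n$ and $h_W(n)=2\sigma\sqrt\pi\,\psi^{-}(\beta)/\sqrt n$.

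Next I would invert these relations in $n$: to detect a prescribed shift of size $h$ with the same level and power, the required sample sizes are $n_C=(\psi^{-}(\beta)/h)^2$ and $n_W=(2\sigma\sqrt\pi\,\psi^{-}(\beta)/h)^2$. Forming the ratio, the common factor $(\psi^{-}(\beta)/h)^2$ cancels and one is left with
\[
\frac{n_C}{n_W}=(2\sigma\sqrt\pi)^{-2},
\]
which is already independent of $h$, so the limit $h\to0$ defining $ARE(T_W,T_C)$ is immediate. The final step is the substitution $\sigma^2=1/12$, giving $(2\sigma\sqrt\pi)^{-2}=1/(4\sigma^2\pi)=3/\pi$.

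The computation is routine; the point that genuinely deserves care—and which I would flag as the only real obstacle—is the claim that the ratio does not depend on $\tau$, $\alpha$ or $\beta$, since this is what makes $3/\pi$ a well-defined efficiency rather than a parameter-dependent number. The reason is structural: after normalization both test statistics have limiting processes of the form $\BB(\lambda)+(\text{const})\,\phi_\tau(\lambda)$ with the \emph{same} Brownian bridge law and the \emph{same} function $\phi_\tau$, so the entire dependence on $\tau$, $\alpha$ and $\beta$ is channeled through the single scalar $\psi^{-}(\beta)$ (which carries $q_\alpha$ and $\tau$ within it), and this scalar cancels in $n_C/n_W$. I would also verify the minor technical prerequisite that $\psi^{-}(\beta)\in(0,\infty)$, so that the inversion in $n$ is legitimate. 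Finally I would remark that the exponent here is $2$, arising from the i.i.d.\ scaling $h_n\sim n^{-1/2}$ (whence $n\sim h^{-2}$), in contrast with the exponent $2/D$ produced by the long-range-dependent scaling $h_n\sim d_n/n$; this difference in exponent is precisely why the i.i.d.\ efficiency is $3/\pi$ while the LRD efficiency collapses to $1$.
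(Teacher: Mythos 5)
Your proposal is correct and follows essentially the same route as the paper: the paper also converts the asymptotic power expressions into the detectable shift sizes $h_C(n)=\psi^{-}(\beta)/\sqrt n$ and $h_W(n)=2\sigma\sqrt\pi\,\psi^{-}(\beta)/\sqrt n$ via the function $\psi$ and its generalized inverse, inverts in $n$, equates the shifts, and reads off $n_C/n_W=(2\sigma\sqrt\pi)^{-2}=3/\pi$. Your added remarks on the cancellation of the dependence on $\tau,\alpha,\beta$ and on the contrast with the exponent $2/D$ in the LRD case are consistent with, and slightly more explicit than, the paper's presentation.
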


\section{Simulation Results}

We have proven that for Gaussian data, the CUSUM test and the Wilcoxon change-point test show asymptotically the same performance, i.e. that their ARE is 1. For Pareto(3,1) distributed data, we obtain,
using (\ref{eq:are-formula}) and numerical integration, an ARE of approximately $(2.68)^{2/D}$. Now we will illustrate these findings by a simulation study.

\subsection{Gaussian data}

We consider realizations $\xi_1,\ldots,\xi_n$ of a fGn process with Hurst parameter $H=0.7$ ($D=0.6$), using the \texttt{fArma} package in \texttt{R}, and create observations 
$$
 X_i=
\begin{cases}
	G(\xi_i)   &   \text{for } i=1,\ldots, [n\lambda] \\
	G(\xi_i) +h   &   \text{for } i=[n\lambda]+1,\ldots, n
\end{cases},
$$
by applying a transformation $G$ which is (with respect to the standard normal measure) normalized and square-integrable: $E[G(\xi)]=0$, $E[G^2(\xi)]=1$ for $\xi \sim \mathcal{N}(0,1)$. As a first step, we choose $G(t)=t$ in order to obtain Gaussian observations $X_1, \ldots, X_n$ (later we will choose a function $G$ such that we obtain Pareto distributed data). In other words, we consider data which follow the local alternative 
$$
 A_{\lambda,h}:
 \begin{cases}
	 \mu = E[X_i]=0   &   \text{for } i=1,\ldots,[n\tau] \\
	 \mu = E[X_i]=h   &   \text{for } i=[n\tau]+1,\ldots, n,
\end{cases}
$$
as in \eqref{eq:local-alt}.  In contrast to the simulations by Dehling, Rooch and Taqqu (2013), we choose a sample size $n=2,000$ instead of $n=500$. 
We let both the break point $k=[\tau n]$ and the level shift $h:=\mu_{k+1}-\mu_k$ vary; specifically, we choose $k=100, 200, 600, 1000$ (which corresponds to $\tau = 0.05, 0.1, 0.3, 0,5$) and we let  $h=0.5,1,2$. For each of these situations, we will compare the power of the CUSUM test and the power of the Wilcoxon change-point test in the test problem $(H,A_{\lambda,h})$: We have repeated each simulation $10,000$ times and counted, how often the respective test (correctly) rejected the null hypothesis. 

Since our theoretical considerations yield an ARE of 1, we expect that both tests detect jumps equally well -- that means that both tests, set on the same level, detect jumps of the same height and at the same position in the same number of observations with the same relative frequency. And indeed, we can clearly see in Table~\ref{TAB:Power_Cusum_Wilcox_fgn_N2000} that the power of both tests approximately coincides at many points; differences can be spotted only when the break occurs early in the data.

\begin{table}[htb]
\begin{center}
\begin{tabular}{c|cccc}
         &  \multicolumn{4}{c}{relative jump position $\tau$} \\
         &  0.05   &  0.1    &  0.3    &  0.5  \\
\hline
$h$=0.5  &  0.074  &  0.153  &  0.767  &  0.874  \\
$h$=1    &  0.153  &  0.694  &  1.000  &  1.000  \\
$h$=2    &  0.828  &  1.000  &  1.000  &  1.000
\end{tabular}
\quad
\begin{tabular}{c|cccc}
         &  \multicolumn{4}{c}{relative jump position $\tau$} \\
         &  0.05   &  0.1    &  0.3    &  0.5  \\
\hline
$h$=0.5  &  0.072  &  0.143  &  0.765  &  0.876  \\
$h$=1    &  0.128  &  0.602  &  1.000  &  1.000  \\
$h$=2    &  0.321  &  1.000  &  1.000  &  1.000
\end{tabular}
\\[2mm]
\caption[Power of CUSUM and Wilcoxon change-point test, fgn, sample size $n=2000$]{Power of the CUSUM test (left) and of the Wilcoxon change-point test (right), for $n=2000$ observations of fGn with LRD parameter $H=0.7$, different break points $[\tau n]$ and different level shifts $h$. Both tests have asymptotically level $5\%$. The calculations are based on 10,000 simulation runs.}
\label{TAB:Power_Cusum_Wilcox_fgn_N2000}
\end{center}
\end{table}

\subsection{Heavy tailed data}

We consider again realizations $\xi_1,\ldots,\xi_n$ of a fGn process with Hurst parameter $H=0.7$ ($D=0.6$) and create observations 
$$
 X_i=
\begin{cases}
	G(\xi_i)   &   \text{for } i=1,\ldots, [n\lambda] \\
	G(\xi_i) +h   &   \text{for } i=[n\lambda]+1,\ldots, n
\end{cases},
$$
by applying the transformation 
\[
G(t)   =   \frac{1}{\sqrt{3/4}}\left( (\Phi(t))^{-1/3}-\frac{3}{2} \right).
\]
In this case, the first Hermite coefficient of $G$, obtained by numerical integration, equals $a_1 \approx -0.6784$. This transformation $G$ produces observations $X_i=G(\xi_i)$ which follow a standardized Pareto$(3,1)$ distribution with mean zero and variance $1$. The probability density function of  $X_i$ is given by 
\[
f(x)=   
\begin{cases}
   3\sqrt{\frac{3}{4}} \left( \sqrt{\frac{3}{4}} \, 
 x + \frac{3}{2} \right)^{-4}	&	\text{if } x\geq -\sqrt{\frac{1}{3}}\\
   0	&	\text{else}.
\end{cases}
\]
To the second sample of observations, $X_{[\tau n]+1}, \ldots, X_n$, we again add a constant $h$, but this time we choose
\begin{equation}
 h = h_n   =   c \frac{d_n}{n}  =   c n^{-D/2}
\label{eq:hn-sim}
\end{equation}
as in \eqref{eq:hn}. We let the break point $k=[\tau n]$ vary; here, we choose $\tau = 0.05, 0.1, 0.3, 0.5$. We let also the sample size vary; we will give details below. To these data, we have applied the CUSUM test and the Wilcoxon change-point test, and under $10,000$ simulation runs we counted how often the respective test (rightly) rejected the null hypothesis. 

Now our theoretical considerations, see (\ref{eq:are-formula}), predict for this situation
\[
ARE   =   \lim_{n\to\infty} \frac{n_C}{n_W}   =   \left(\frac{|a_1| \int_\R f^2(x) \; dx}{|\int_\R J_1(x) f(x) \; dx|}\right)^{2/D}   \approx   (2.67754)^{2/0.6}   \approx   26.655.
\]
This means that the CUSUM test needs approximately 26.66 times as many observations as the Wilcoxon test in order to detect the same jump on the same level with the same probability. In order to find this behaviour, we have analysed the power of the Wilcoxon test for $n_W=10, 50, 100, 200$ observations and the power 
of the CUSUM test for $n_C=266, 1332, 2666, 5330$ observations. 

In order to be able to compare the two tests, we need to have identical level shifts when applying the Wilcoxon test to a sample of size $n_W$ and the CUSUM test to a sample of size $n_C=26.655\, n_W$. This can be achieved by choosing the constants $c$ in (\ref{eq:hn-sim}) accordingly, namely  taking $c_C=2.67754 \, c_W$. In this way, we obtain
\[
  h_{n_C}^C=c_C n_C^{-D/2}=2.67754 c_W (26.655 n_W)^{-D/2}= c_W n_W^{-D/2}=h_{n_W}^W.
\]
We ran simulations for two different choices of $c_W$, namely $c_W=1$ and $c_W=2$; see Table~\ref{TAB:Power_Cusum_Wilcox_pareto31_diverseN_c1} and Table~\ref{TAB:Power_Cusum_Wilcox_pareto31_diverseN_c2} for the results.

Here, we have to face a problem which was already encountered by Dehling, Rooch and Taqqu (2013). For the heavy-tailed Pareto data, the convergence of the CUSUM test statistic towards its limit is so slow  that the asymptotic quantiles of the limit distribution are not appropriate as critical values to define the domain of rejection of the test: In finite sample situations, the observed level of the test is not 5\% -- as it should be when using the 5\%-quantile of the asymptotic limit distribution. In order to remedy this, we used as critical value for the test,  the finite sample 5\% quantiles of the distribution of the CUSUM test statistic under the null hypothesis, using a Monte Carlo simulation; see Table~6 in Dehling, Rooch and Taqqu (2013). 
Here, we have performed the same steps, but for sample sizes $n=n_C=266, 1332, 2666, 5330$. The results are given in Table~\ref{TAB:Empirical_Quantiles_Cusum_pareto31}. Note that this problem does not arise when using the Wilcoxon change-point test, since the Wilcoxon test is distribution free under the null hypothesis.
\begin{table}[htb]
\begin{center}
\begin{tabular}{c|ccccc}
$n$                     &  266   &  1332  &  2666  &  5330   &   $\infty$  \\
\hline
$q_{\text{emp}, 0.05}$  &   0.73  &   0.66  &   0.64  &   0.63  &   0.59 
\end{tabular}
\\[2mm]
\caption[Empirical quantiles of CUSUM test for Pareto(3,1) data]{5\%-quantiles of the finite sample distribution of the CUSUM test under the null hypothesis for Pareto(3,1)-transformed fGn with LRD parameter $H=0.7$ and different sample sizes $n=n_C$.}
\label{TAB:Empirical_Quantiles_Cusum_pareto31}
\end{center}
\end{table}

The simulation results are shown in Table~\ref{TAB:Power_Cusum_Wilcox_pareto31_diverseN_c1} (for $c_W=1$) and Table~\ref{TAB:Power_Cusum_Wilcox_pareto31_diverseN_c2} (for $c_W=2$). Indeed, for a fixed jump position $\tau$,  the power of the CUSUM test (for $n=n_C=266, 1332, 2666, 5330$ observations) and of the Wilcoxon test (for $n=n_W=10, 50, 100, 200$ observations) coincide. They are not fully equal, but we conjecture this is due to the small sample size  which conflicts with the asymptotic character of our results. But it becomes clear: The CUSUM test needs quite a number of observations more to detect the same jump on the same level with the same probability -- as predicted by our calculation around 25 times as many. 

\begin{table}[htb]
\begin{center}
\begin{tabular}{rc|ccccc}
          &  &\multicolumn{4}{c}{relative jump position $\tau$} \\
          $n$ & $h$ &  0.05  &  0.1    &  0.3    &  0.5  \\
\hline
 266   & 0.50 &0.049  &  0.049  &  0.066  &  0.088  \\
1332  & 0.31 &0.050  &  0.052  &  0.083  &  0.110  \\
2666  &  0.25&0.052  &  0.055  &  0.092  &  0.127  \\
5330  &  0.20&0.051  &  0.054  &  0.099  &  0.130
\end{tabular}
\quad
\begin{tabular}{rc|ccccc}
         & & \multicolumn{4}{c}{relative jump position $\tau$} \\
        $n$ & $h$ &  0.05  &  0.1    &  0.3    &  0.5  \\
\hline
10   &0.50&  0.036  &  0.025  &  0.033  &  0.079  \\
50   & 0.31& 0.049  &  0.051  &  0.093  &  0.120  \\
100  & 0.25& 0.050  &  0.053  &  0.102  &  0.134  \\
200  &  0.20 &0.051  &  0.055  &  0.103  &  0.134
\end{tabular}
\\[2mm]
\caption[Power of CUSUM and Wilcoxon change-point test, Pareto(3,1) data, sample size $n=2000$]{Power of the CUSUM test (left) and of the Wilcoxon change-point test (right), at different break points $[\tau n]$, different sample sizes $n$, and different jump heights $h$, for Pareto(3,1) distributed data. Both tests have asymptotically level $5\%$ (CUSUM test is performed with empirical quantiles). The calculations are based on 10,000 simulation runs.}
\label{TAB:Power_Cusum_Wilcox_pareto31_diverseN_c1}
\end{center}
\end{table}

\begin{table}[htb]
\begin{center}
\begin{tabular}{rc|ccccc}
          &  & \multicolumn{4}{c}{relative jump position $\tau$} \\
         $n$ &$h$&   0.05   &  0.1     &  0.3     &  0.5  \\
\hline
 266   & 1.00  & 0.049  &   0.054  &   0.162  &   0.259 \\
1332  &  0.62 & 0.052  &   0.062  &   0.236  &   0.345 \\
2666  &   0.50& 0.055  &   0.069  &   0.272  &   0.390 \\
5330  &   0.41 & 0.054  &   0.074  &   0.287  &   0.402
\end{tabular}
\quad
\begin{tabular}{rc|ccccc}
         &&  \multicolumn{4}{c}{relative jump position $\tau$} \\
         $n$ &$h$ &   0.05  &  0.1    &  0.3    &  0.5  \\
\hline
10   & 1.00 & 0.033  &   0.024  &   0.039  &   0.197 \\
50   &  0.62 &0.049  &   0.055  &   0.199  &   0.283 \\
100  &  0.50 &0.051  &   0.063  &   0.225  &   0.316 \\
200  &   0.41&0.054  &   0.066  &   0.242  &   0.338
\end{tabular}
\\[2mm]
\caption[Power of CUSUM and Wilcoxon change-point test, Pareto(3,1) data, sample size $n=2000$]{Power of the CUSUM test (left) and of the Wilcoxon change-point test (right) at different break points $[\tau n]$, different sample sizes $n$, and different jump heights $h$,  for Pareto(3,1) distributed data. Both tests have asymptotically level $5\%$ (CUSUM test is performed with empirical quantiles). The calculations are based on 10,000 simulation runs.}
\label{TAB:Power_Cusum_Wilcox_pareto31_diverseN_c2}
\end{center}
\end{table}

\end{document}